\theoremstyle{plain}
\newtheorem{teorema}{Theorem}[section]
\newtheorem{lemma}[teorema]{Lemma}
\newtheorem{corollario}[teorema]{Corollary}
\newtheorem{proposizione}[teorema]{Proposition}
\theoremstyle{definition}
\newtheorem{definizione}[teorema]{Definition}
\theoremstyle{remark}
\newtheorem{remark}[teorema]{Remark}
\newtheorem{esempio}[teorema]{Example}
\def \p {\partial}
\def \bpa {\overline{\p}}
\title[A Kummer construction for Chern-Ricci flat balanced manifolds]{A Kummer construction for Chern-Ricci flat balanced manifolds}
\author{Federico Giusti}
\address{Department of Mathematics, Aarhus University, Ny Munkegade 118, 8000 Aarhus C, Denmark}
\email{federico.giusti@math.au.dk}
\author{Cristiano Spotti}
\address{Department of Mathematics, Aarhus University, Ny Munkegade 118, 8000 Aarhus C, Denmark}
\email{c.spotti@math.au.dk}
\date{\today}
\subjclass[2010]{53C55, 53C25, 53C07}
\keywords{complex non-Kähler manifolds, balanced metrics, Chern-Ricci flat metrics, Calabi-Yau manifolds, Hull-Strominger system}
\begin{document}

\begin{abstract} 
Given a non-Kähler Calabi-Yau compact orbifold with isolated singularities endowed with a Chern-Ricci flat balanced metric, we study, via a gluing construction, the existence of Chern-Ricci flat balanced metrics on its crepant resolutions, and discuss applications to the search of solutions for the Hull-Strominger system. We also describe the scenario of singular threefolds with ordinary double points, and see that similarly is possible to obtain balanced approximately Chern-Ricci flat metrics.
\end{abstract}

\maketitle

\section{Introduction}

With the ultimate aim of geometrizing and classifying, one of the most studied problems in complex geometry is the existence of hermitian metrics that can be regarded as \textit{special}. Through the years, the Kähler case is the one that has been studied and understood the most, however, in the last decades the interest towards the non-Kähler world has been increasing more and more, leading to the search for special metrics also in this particular context. While in the Kähler case special metrics arise naturally, the non-Kähler scenario is too wild to guide us directly towards some central notion of special metric. Nevertheless, one can have indications on the path to follow by watching the Kähler world; more specifically, given an $n$-dimensional complex manifold $(M,J)$, if it is Kähler the obvious class of special (on a first level) metrics is given exactly by Kähler metrics - which we recall being hermitian metrics $h$ whose fundamental form $\omega:=h(J\cdot,\cdot)$ is $d$-closed. In addition, this condition can also be combined with the notion of \textit{Einstein metric} (thanks to the properties of Kähler metrics) from the general riemannian case, giving rise to the notion of Kähler-Einstein metrics, which are universally regarded as the "most special" in the Kähler world. Likewise, other notions of special Kähler metrics have been introduced and studied (some of them are still central in the study of Kähler geometry), like \textit{constant scalar curvature Kähler} (cscK) metrics, or the more general class of \textit{extremal Kähler} metrics (introduced by Calabi in \cite{C}), however they all share the fact that they are giving a curvature condition on the metric, thus this suggests that when searching for special metrics in the non-Kähler case we shall ask for these metrics to be special under two aspects: the cohomological one (satisfying a condition possibly generalizing the Kähler one) and the curvature one.\\ 
Regarding the cohomological aspect, several conditions have been introduced that generalize the Kähler one, and one of the most studied is given by $d \omega^{n-1}=0$, identifying the class of \textit{balanced metrics} (originally introduced by Michelsohn \cite{M}, and also considered by Gauduchon in \cite{G} as \textit{semi-Kähler} metrics), which is the class of metrics we are interested in working with. Balanced metrics carry many interesting properties such as the coincidence between the Hodge laplacian and the Dolbeault laplacian on scalar functions (showed by Gauduchon in \cite{G}), or the preservation of the balanced condition for manifolds under holomorphic submersions proved in \cite{M} (showing a sort of duality between the Kähler condition and the balanced condition). Also in \cite{M}, Michelsohn proved a characterization of balanced metrics in terms of currents, which leads to the celebrated result from Alessandrini and Bassanelli (see \cite{AB1}) showing that the class of compact balanced manifolds is closed under proper modifications (condition not satisfied by the class of Kähler manifolds). Moreover, balanced metrics ended up being central in many interesting currently open problems, such as the conjecture from Fino and Vezzoni (see \cite{FV}) and the Gauduchon conjecture for balanced metrics (see \cite{STW}, in which was solved in its original version for Gauduchon metrics - identified by the condition $\partial\overline{\partial}\omega^{n-1}=0$, which weakens the balanced condition - posed by Gauduchon).\\
Moving instead on the curvature aspect, there are several known notions of special metrics in the non-Kähler world such as Chern-Ricci flat metrics, Bismut-Ricci flat metrics (which in the balanced case are equivalent to Chern-Ricci flat metrics, see \cite{AI}), Chern-Einstein metrics and many more.\\
The main goal of this paper is to study the existence of Chern-Ricci flat balanced metrics on the crepant resolutions of certain non-Kähler Calabi-Yau singular manifolds endowed with a singular Chern-Ricci flat balanced metric. More specifically we wish to work on compact orbifolds $\tilde{M}$ whose singular set is made of isolated singularities admitting crepant resolutions, and are endowed with a balanced Chern-Ricci flat singular metric $\tilde{\omega}$. Then, performing a cut-off on the singular metric to the flat one around the singularities, together with what it is known on orbifold singularities (i.e. Joyce's theory on ALE spaces) and its crepant resolutions to build, with a gluing construction (inspired by, for example, \cite{AP}, \cite{BM} and \cite{J}), Chern-Ricci flat balanced metrics on the crepant resolutions of the orbifold, provided an assumption on the linearization of the operator. The strategy of the proof consists of two main steps: (1) a metric "rough" gluing between the singular Chern-Ricci flat balanced metric $\tilde{\omega}$ with the (rescaled) Joyce's ALE metrics $\omega_{ALE}$ (that are Kähler Calabi-Yau metrics on the crepant resolution of the singularity model, see \cite{J}), and (2) an Implicit Function Theorem deformation argument, where the deformation is a balanced deformation (introduced in \cite{FWW}) and all the analysis is performed in suitable weighted Hölder spaces (by the way, the choice of the deformation shows also that on this class of manifolds a Calabi-Yau-type theorem for balanced metrics holds for some classes in the balanced cone). Our main result is the following.
\begin{teorema}\label{main}
Let $(\tilde{M},\tilde{\omega})$ be an $n$-dimensional non-Kähler Calabi-Yau compact orbifold with isolated singularities, endowed with $\tilde{\omega}$ a singular Chern-Ricci flat balanced metric, and let $M$ be a crepant resolution of $\tilde{M}$. Suppose that the operator $\Delta_\omega+|\p\omega|_\omega^2Id$ has vanishing kernel on $\tilde{M}$. Then $M$ admits a Chern-Ricci flat balanced metric such that
\[
[\omega^{n-1}]=[\hat{\omega}^{n-1}]=[\tilde{\omega}^{n-1}]+(-1)^{n-1}\varepsilon^{(2n-2)}(\sum_{i=1}^{k_j} \sum_{j=1}^k a_j^i PD[E_j^i])^{n-1},
\]
where $PD[E_j^i]$ denotes the Poincaré dual of the class $[E_j^i]$, represented by the $i$-th component of the $j$-th exceptional divisor arising from the resolution of the corresponding singularity.
\end{teorema}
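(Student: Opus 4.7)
The plan is to produce, for each small $\varepsilon>0$, a balanced metric $\omega_\varepsilon$ on $M$ which is \emph{approximately} Chern-Ricci flat, and then to deform $\omega_\varepsilon$ through balanced metrics to an exact solution using an implicit function theorem. First I would construct the approximate metric by gluing. Around each singular point $p_j \in \tilde M$, fix a uniformizing chart identifying a neighbourhood of $p_j$ with an orbifold ball $B/\Gamma_j \subset \mathbb{C}^n/\Gamma_j$. By Joyce's theorem the crepant resolution $\pi_j : X_j \to \mathbb{C}^n/\Gamma_j$ carries an ALE Calabi--Yau Kähler metric $\omega_{ALE,j}$, asymptotic to the flat metric, with Kähler class built from the exceptional divisors $E^i_j$. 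Since $\omega_{ALE,j}$ is Kähler Ricci-flat, its $(n-1)$-st power is closed and its Chern--Ricci form vanishes. I would then glue $\tilde\omega$ and the rescaled $\varepsilon^2\,\omega_{ALE,j}$ \emph{at the level of $(n-1,n-1)$-forms}, using a cut-off of the local potentials (in the sense of Michelsohn's characterization), so that $\omega_\varepsilon^{n-1}$ is globally closed. This uses the fact that in the annular gluing region both $\tilde\omega^{n-1}$ and $\varepsilon^{2(n-1)}\omega_{ALE,j}^{n-1}$ are close to the flat $(n-1,n-1)$-form modulo controlled $\partial\bar\partial$-exact terms. By construction the cohomology class of $\omega_\varepsilon^{n-1}$ is exactly the one claimed in the theorem, the factor $\varepsilon^{2(n-1)}$ coming from the rescaling of the Kähler class of the ALE model and the sign $(-1)^{n-1}$ from taking $(n-1)$-st powers of the Poincaré duals.

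Second, I would set up the deformation problem on suitable weighted Hölder spaces $C^{k,\alpha}_{\delta,\varepsilon}$ adapted to the gluing geometry: these interpolate between the orbifold Hölder space on the neck-complement and the ALE Hölder space on the rescaled resolution of each singularity. Following Fu--Wang--Wu, balanced deformations of $\omega_\varepsilon$ can be parametrized as
\[
\omega^{n-1} = \omega_\varepsilon^{n-1} + \partial\bar\partial \gamma + \partial \beta + \bar\partial \bar\beta ,
\]
with $\gamma$ a real $(n-2,n-2)$-form (and $\beta$ an $(n-2,n-1)$-form determined by the positivity condition), so that the Chern--Ricci flatness becomes a nonlinear elliptic equation $\mathcal{F}_\varepsilon(\gamma)=0$ on the space of such potentials. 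The equation $\mathcal{F}_\varepsilon(0)$ measures the failure of the glued metric to be Chern--Ricci flat and, by the above discussion, is supported in the gluing annuli and decays as a power of $\varepsilon$ in the appropriate weighted norm.

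Third, and this is the main analytic obstacle, I would need to show that the linearization $D\mathcal{F}_\varepsilon|_0$ is uniformly invertible, with bounds independent of $\varepsilon$ on the chosen weighted spaces. This requires a Fredholm-type analysis for the linearized operator both on the non-Kähler orbifold $(\tilde M,\tilde\omega)$ (where the Chern--Ricci flatness of $\tilde\omega$ and the non-Kähler Calabi--Yau assumption must be used to rule out obstructions) and on the Kähler ALE models $(X_j,\omega_{ALE,j})$ (where the standard ALE Fredholm theory applies), together with a matching argument on the necks in the style of Arezzo--Pacard. Once this is in place, a standard implicit function / Newton scheme produces the desired balanced Chern--Ricci flat metric $\omega$ for $\varepsilon$ small enough. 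Since the correction $\partial\bar\partial\gamma + \partial\beta + \bar\partial\bar\beta$ is $d$-exact in degree $2n-2$, the cohomology class $[\omega^{n-1}]$ coincides with $[\omega_\varepsilon^{n-1}]$, giving the formula in the statement.
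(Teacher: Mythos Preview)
Your overall architecture---glue at the level of $(n-1,n-1)$-forms to get an approximately Chern--Ricci flat balanced metric, then deform to an exact solution by an implicit function argument in weighted H\"older spaces---is exactly the paper's strategy, and your identification of the class via the ALE Kähler classes is also the paper's. However, there is one substantive point where your proposal diverges from the paper and becomes vague precisely where the real content lies.

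You parametrize balanced deformations by a general real $(n-2,n-2)$-form $\gamma$ (plus $\partial\beta+\bar\partial\bar\beta$), so that $D\mathcal{F}_\varepsilon|_0$ is an operator on forms. The paper instead takes the \emph{scalar} ansatz $\gamma=\psi\,\omega^{n-2}$ with $\psi\in C^\infty(M,\mathbb R)$, i.e.
\[
\omega_\psi^{n-1}=\omega^{n-1}+i\partial\bar\partial(\psi\,\omega^{n-2}),
\]
and solves the scalar equation $\omega_\psi^n=e^f\omega^n$. The linearization then computes explicitly to
\[
L u=\frac{n}{n-1}\Bigl(\Delta_\omega u-\tfrac{1}{(n-1)!}\,|d\omega|_\omega^2\,u\Bigr),
\]
a Schr\"odinger-type operator with potential $|d\omega|_\omega^2$. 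The non-K\"ahler hypothesis enters exactly here: since $d\omega\not\equiv 0$, the potential does not vanish identically, and the integration-by-parts identity $\int_M(|\nabla_\omega u|^2+\tfrac{1}{(n-1)!}|d\omega|_\omega^2 u^2)\,\omega^n=0$ forces $u\equiv 0$. This is the entire mechanism that ``rules out obstructions'' in your words; it is not a Fredholm matching statement but a one-line coercivity fact coming from the non-K\"ahler assumption. With your general $(n-2,n-2)$-form $\gamma$ the linearization is a system on forms, and you give no argument for why it should be injective (let alone uniformly in $\varepsilon$); the Chern--Ricci flatness of $\tilde\omega$ plays no role in the paper's invertibility argument, contrary to what you suggest. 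So either specialize to the scalar ansatz as the paper does, or supply a genuine argument for invertibility on forms---as written, that step is a gap.

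Two smaller remarks. First, the paper's gluing is done in three stages (cut $\tilde\omega$ to flat near the singularity at the $(n-1,n-1)$-level; cut $\omega_{ALE}$ to flat at the K\"ahler level; match the flat regions by rescaling), rather than a direct one-shot gluing; this makes the error estimates on the Chern--Ricci potential elementary. Second, the uniform weighted estimate for $L^{-1}$ is obtained not by an Arezzo--Pacard-style matching of Dirichlet data, but by a blow-up/contradiction argument: a putative bad sequence is analyzed on $M_{reg}$, on the ALE model, and (after a further rescaling) on the flat cone, and in each limit one shows the limit function must vanish. Your description of this step as ``matching on the necks'' is not how the paper proceeds.
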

Our interest towards Chern-Ricci flat balanced metrics comes actually from the realm of Calabi-Yau geometry. Indeed, for a not necessarily Kähler Calabi-Yau manifold (i.e. a complex manifold endowed with a holomorphic volume form) it was introduced by Hull and Strominger (respectively in \cite{Hu} and \cite{S}) a system of four equations coming from superstring theory known as the \textit{Hull-Strominger system}, whose solutions have proved to be extremely hard to construct (see \cite{GF} for a full presentation of the system and some known solutions, together with several other references such as \cite{AGF}, \cite{FuY}, \cite{LY3}, \cite{P}, \cite{TY} and the very recent \cite{CPY2}, \cite{FeY} for the invariant case, \cite{PPZ} for a flow approach, and the recent moment map picture from \cite{GFGM}). The problem of solving this system, apart from its physical meaning, carries great geometric interest, since it generalizes the Calabi-Yau condition to the non-Kähler framework, and it holds a central role in the geometrization conjecture for compact Calabi-Yau threefolds known as \textit{Reid's Fantasy} (see \cite{R}). This last conjecture, in particular, states that all compact Kähler Calabi-Yau threefolds can be connected through a finite number of \textit{conifold transitions} (introduced by Clemens and Friedman, see \cite{F}), i.e. a procedure consisting of the contraction of a finite family of disjoint $(-1,-1)$-curves in a compact Calabi-Yau threefold, followed by the smoothing of the ordinary double points obtained from the previous step. These framework motivates further our interest towards Chern-Ricci flat balanced metrics, since it is directly related to one of the equation of the Hull-Strominger system, namely the \textit{conformally balanced equation}, which on a compact Calabi-Yau manifold $(X,\Omega)$ - where $\Omega$ is the holomorphic volume form - is an equation for hermitian metrics (actually their fundamental forms) $\omega$ given by $d(||\Omega||_\omega \omega^{n-1})=0$ which is clearly satisfied by balanced Chern-Ricci flat hermitian metrics. Moreover, our result takes a first step towards solving the problem proposed by Becker, Tseng and Yau in (Section 6 of) \cite{BTY}. \\
A natural question that arises from this construction, in the setting of the Hull-Strominger system and Reid's Fantasy, is if this strategy can be adapted to the case of singular threefolds with a finite family of ordinary double points aiming (in some sense) towards "reversing the arrow" in the construction done by Fu, Li and Yau in \cite{FLY} and Collins, Picard and Yau in \cite{CPY1}. Our strategy in this scenario unfortunately carries a complication that is hidden in the asymptotic behaviour of the standard Calabi-Yau metric $\omega_{co,a}$ (introduced by Candelas and de la Ossa, see \cite{CO}) on the small resolution of the standard conifold, thus in the last section we shall talk in more detail about the difficulties of the case and discuss some possible paths towards a solution of the problem in this other scenario. We are anyway able to achieve some partial result, that is the following.
\begin{proposizione}\label{balancedodp}
Let $(\tilde{M},\tilde{\omega})$ be a smoothable projective Kähler Calabi-Yau nodal threefold (with $\tilde{\omega}$ a singular Calabi-Yau metric), and let $M$ be a compact (not necessarily Kähler) small resolution of $\tilde{M}$. Then $M$ admits a balanced approximately Chern-Ricci flat metric $\omega$ in the class $[p^*\tilde{\omega}^2]$, where $p$ is the resolution map collapsing the $\mathbb{P}^1$'s.
\end{proposizione}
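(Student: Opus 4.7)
The plan is to run the strategy of Theorem \ref{main} up to (but not past) the balanced gluing step, and to accept the resulting non-zero Chern-Ricci defect rather than trying to remove it via an Implicit Function Theorem, which is exactly the operation that fails in this setting.

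First I would set up the local models. Near each ordinary double point $p_j\in\tilde{M}$ I identify a neighborhood biholomorphically with a neighborhood of the vertex of the standard conifold $\mathcal{C}_0=\{z_1^2+\cdots+z_4^2=0\}\subset\mathbb{C}^4$, so that $\tilde{\omega}$ is asymptotic there to the Stenzel--Candelas--de la Ossa Calabi--Yau cone metric $\omega_{co,0}$ (by the singular Calabi--Yau hypothesis together with standard regularity results at isolated conical singularities). On the resolution side, a neighborhood of the exceptional $\mathbb{P}^1_j$ in $M$ is identified with Candelas--de la Ossa's small resolution $\mathcal{C}_a$ equipped with its Ricci-flat Kähler metric $\omega_{co,a}$, which is asymptotic at infinity to the same cone model. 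Rescaling by a small parameter $\varepsilon>0$ concentrates $\varepsilon^2\omega_{co,a}$ near each exceptional curve.

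Next I would glue these pieces while preserving the balanced condition. Since both $\tilde{\omega}^2$ and $(\varepsilon^2\omega_{co,a})^2$ are closed positive $(2,2)$-forms whose difference on the transition annulus around each node is exact (the annulus being topologically trivial), following the Fu--Wang--Wu balanced deformation framework used in Theorem \ref{main} I cut off at the level of primitives so that the resulting globally defined $(2,2)$-form remains closed and positive. Extracting the hermitian form $\omega$ by taking the appropriate root gives $d(\omega^2)=0$, i.e.\ a genuinely balanced metric. The cohomology identity
\[
[\omega^2]=[\tilde{\omega}^2]+\varepsilon^4[\omega_{co,a}^2]
\]
is then tautological from the construction, and pairing $[\omega_{co,a}^2]$ against the only relevant cycle in $\mathcal{C}_a$ shows it to be a positive multiple of the Poincaré dual of the exceptional $\mathbb{P}^1$, which after normalization of $\omega_{co,a}$ yields the claimed $\varepsilon^4[\mathbb{P}^1]$.

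For the approximate Chern-Ricci flatness, outside the transition annuli $\omega$ agrees either with $\tilde{\omega}$ or with $\varepsilon^2\omega_{co,a}$, both Chern-Ricci flat; inside the annuli the Chern-Ricci form is controlled by the asymptotic difference between $\omega_{co,a}$ and $\omega_{co,0}$, and can be made arbitrarily small in a suitable weighted Hölder norm as $\varepsilon\to 0$. The hard part—and the reason the conclusion cannot be promoted to an exact Chern-Ricci flat statement without further input—is precisely the difficulty flagged in the introduction: the Candelas--de la Ossa metric approaches its asymptotic cone much more slowly than the ALE Joyce models of Theorem \ref{main}, so the linearised balanced Chern-Ricci flat operator on the natural weighted Hölder spaces does not admit a bounded right inverse with decay sufficient to absorb the glued error. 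The Implicit Function Theorem step that closed the argument in the orbifold case therefore does not close here, and my proof plan stops at the approximate stage, which is all the statement demands.
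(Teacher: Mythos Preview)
Your plan is essentially the paper's own argument. Two small corrections are worth making. First, the asymptotic $\tilde{\omega}\sim\omega_{co,0}$ near each node is not a generic regularity fact but the polynomial-rate statement of Hein--Sun, which is precisely where the smoothability hypothesis enters; the paper uses it to cut $\tilde{\omega}$ to the cone at the K\"ahler \emph{potential} level, reserving the $(2,2)$-level balanced cut-off for the resolution side, where $\omega_{co,a}$ is not $i\partial\overline{\partial}$-exact at infinity but $\omega_{co,a}^2$ is. Second, your diagnosis of the obstruction to the exact statement is slightly off: the paper shows the linearised operator \emph{is} uniformly invertible on the weighted spaces, and what actually fails is the self-mapping $N(U_\tau)\subset U_\tau$ in the contraction step, the quadratic decay of $\omega_{co,a}$ to its cone being exactly the threshold rate. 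Neither point undermines your argument for the proposition as stated.
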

The paper is structured as follows. In Section 2, after giving examples, we present the first step of our work, consisting of the construction of a balanced metric on the crepant resolution made with the objects previously introduced, together with the construction of a global holomorphic volume, in order to express the Chern-Ricci potential for our new balanced metric and obtain estimates for it. In Section 3, we apply a deformation argument to obtain, under an assumption on the linearization of the operator, a genuine Chern-Ricci flat balanced metric and we discuss its possible applications to the search of solutions for the Hull-Strominger system. In the last section, i.e. Section 4, we take a look at the case of Ordinary Double Points on threefolds, walk again through the gluing process from Section 2 to produce again a balanced approximately Chern-Ricci flat balanced metrics, and discuss the difficulties that arise if we try to repeat the deformation argument in this case.

\par 
\vspace{0.5cm}
{\bf Acknowledgements.}
Both the authors are supported by Villum Young Investigator 0019098.
\par 

The authors would like to thank Mario Garcia-Fernandez for useful conversations and remarks. The are also grateful to Song Sun and Junsheng Zhang for pointing out a mistake in the first version of Lemma 2.7's proof. Also they would like to thank the anonymous referee for pointing out Remark 4.8 along with many useful comments that significantly improved the presentation.

\section{The pre-gluing metric}
Following several known gluing constructions from the literature (such as \cite{AP}, \cite{BM}, \cite{J} and many others), our gluing process will be made of two main parts: the construction of a pre-gluing metric (which will be done in this section) obtained from a rough cut-off procedure providing an approximate solution to the problem, and a perturbative argument to obtain a genuine solution.\\
The goal of this section will be to prove the following:
\begin{proposizione}\label{approxbcrf}
Let $\tilde{M}$ be a Calabi-Yau compact orbifold with isolated singularities, endowed with a Chern-Ricci flat balanced singular metric $\tilde{\omega}$, and suppose that it admits $M$ a crepant resolution. Then $M$ admits $\omega$ an approximately Chern-Ricci flat balanced metric.
\end{proposizione}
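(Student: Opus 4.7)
The plan is to produce $\omega$ by a cut-off gluing at the level of $(n-1,n-1)$-forms, combining $\tilde{\omega}^{n-1}$ on $\tilde{M}$ away from its singular set with rescaled Joyce ALE forms on the crepant resolutions of each orbifold model, and then applying Michelsohn's algebraic $(n-1)$-th root to recover a Hermitian $(1,1)$-form that is balanced by construction. The preliminary step is to pin down the two matching asymptotics at the neck: using the Chern--Ricci flat Monge--Amp\`ere equation $\tilde{\omega}^n = c \cdot i^{n^2}\tilde{\Omega}\wedge\overline{\tilde{\Omega}}$ together with the balanced condition, I would prove that $\tilde{\omega}$ is asymptotic to $\omega_{eucl}$ at each orbifold point $p_j$ (locally modeled on $\mathbb{C}^n/\Gamma_j$ with $\Gamma_j\subset SU(n)$) with some polynomial rate, while Joyce's theorem provides $\omega_{ALE,j} = \omega_{eucl} + O(r^{-2n})$ at infinity on the resolution $M_j$ of $\mathbb{C}^n/\Gamma_j$.

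For a small parameter $\varepsilon > 0$ and a gluing radius $r_\varepsilon$ with $\varepsilon \ll r_\varepsilon \ll 1$, the key observation is that on the annulus $\{r\sim r_\varepsilon\}$ both closed forms $\tilde{\omega}^{n-1}$ and the pulled-back, rescaled form $\varepsilon^{2(n-1)}\omega_{ALE,j}^{n-1}$ are small perturbations of $\omega_{eucl}^{n-1}$, so their difference is a closed form of controlled size. A weighted Poincar\'e lemma on the annulus should then produce $\beta_{j,\varepsilon}$ with
\[
\tilde{\omega}^{n-1} - \varepsilon^{2(n-1)}\omega_{ALE,j}^{n-1} = d\beta_{j,\varepsilon}
\]
and a sharp norm estimate. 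Choosing a smooth cut-off $\chi_\varepsilon$ supported in the neck, I would set
\[
\Psi_\varepsilon := \varepsilon^{2(n-1)}\omega_{ALE,j}^{n-1} + d(\chi_\varepsilon\,\beta_{j,\varepsilon})
\]
on the neck, extended by $\tilde{\omega}^{n-1}$ outward and by the scaled ALE form inward. This $\Psi_\varepsilon$ is globally defined, $d$-closed by construction, and positive for $\varepsilon$ small; Michelsohn's algebraic $(n-1)$-th root then yields a Hermitian form $\omega:=\Psi_\varepsilon^{1/(n-1)}$ with $d\omega^{n-1}=0$, hence balanced.

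For the approximate Chern--Ricci flatness, I would first assemble a global holomorphic volume form $\Omega$ on $M$ by matching $\tilde{\Omega}$ with the standard Joyce volume on each $M_j$ via the crepant identification $K_{M_j} = \pi_j^* K_{\tilde{M}}$. Outside the necks, $\omega$ coincides either with $\tilde{\omega}$ (Chern--Ricci flat by hypothesis) or with a scaled K\"ahler Ricci-flat ALE metric, so the Chern--Ricci form $\rho(\omega) = -i\partial\bar{\partial}\log(\omega^n/i^{n^2}\Omega\wedge\bar{\Omega})$ vanishes there. Inside each neck, $\rho(\omega)$ is controlled by $\beta_{j,\varepsilon}$ and by the two asymptotic decay rates, yielding the required smallness in a suitable weighted H\"older norm.

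The main obstacle is producing $\beta_{j,\varepsilon}$ with sharp enough estimates: one needs simultaneously (i) positivity of $\Psi_\varepsilon$ throughout the neck, (ii) a Chern--Ricci error small enough in the weighted norms to make the implicit function theorem argument of Section 3 close, and (iii) an optimal choice of the gluing radius $r_\varepsilon$ balancing the decay of $\tilde{\omega} - \omega_{eucl}$ as $r\to 0$ against the contribution $O(\varepsilon^{2n+2}r^{-2n})$ coming from the rescaled ALE form at the neck scale.
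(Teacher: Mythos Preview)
Your plan is viable and would yield the same conclusion, but the paper takes a structurally different and more elementary route. Rather than a single-annulus gluing backed by a weighted Poincar\'e lemma, the paper inserts an \emph{exactly flat} intermediate annulus and proceeds in three decoupled steps: (i) on $\tilde{M}$, cut $\tilde{\omega}^{n-1}$ to $\omega_o^{n-1}$ on a ball of radius $\sim\varepsilon^p$ using only the Taylor expansion $\tilde{\omega}=\omega_o+O(|z|)$ in an orbifold chart (no Chern--Ricci flat Monge--Amp\`ere input is needed, contrary to your preliminary step), which gives $\tilde{\omega}^{n-1}-\omega_o^{n-1}=d\beta$ with $\beta=O(|z|^2)$ via the elementary observation that the linear part of any primitive of an $O(|z|)$ form is closed; (ii) on the model resolution $\hat{X}$, cut $\omega_{ALE}$ to $\omega_o$ at the level of K\"ahler potentials, so this piece remains K\"ahler; (iii) rescale and match isometrically on the flat overlap. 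This decoupling dissolves your ``main obstacle'' entirely---no sharp weighted Poincar\'e estimate is required, and the $O(r)$ and $O(r^{-2n})$ errors never interact in the same annulus. Your direct approach trades a shorter narrative for a harder analytic lemma; the paper's buys explicit constants and a clean separation of scales governed by two independent exponents $p,q$.

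One point to correct in your Chern--Ricci discussion: on the ALE region the Chern--Ricci \emph{form} does vanish, but the Chern--Ricci \emph{potential} $f=\log\bigl(i^{n^2}\Omega\wedge\overline{\Omega}/\omega^n\bigr)$ does not, because the global holomorphic volume is $\Omega=h\,\hat{\Omega}$ with $|h|^2=1+O(r)$ arising from the holomorphic extension of $\tilde{\Omega}/\Omega_o$ across the exceptional divisor. Since the deformation argument that follows solves $\omega_\psi^n=e^f\omega^n$, it is $f$---not $\rho(\omega)$---that must be controlled, and the paper obtains $|f|\le c\,r^m$ on all of $M$, including the interior ALE piece.
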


\subsection{Chern-Ricci flat balanced orbifolds and their crepant resolutions}
Before discussing the construction, we shall establish some notations for the reminder of the paper, and also use the occasion to briefly recall some known results from literature to understand better the framework we will be working in.\\
Throughout the paper we will denote with $\tilde{M}$ an $n$-dimensional non-Kähler Calabi-Yau compact orbifold, i.e. a complex orbifold endowed with a holomorphic volume form $\tilde{\Omega}$, with isolated singularities, such that it admits a crepant resolution $M$. 
\begin{remark}
A necessary condition for an orbifold to admit crepant resolutions is that the isotropy groups corresponding to the singularities are subgroups of $SL(n,\mathbb{C})$, and for $n=3$ it is also sufficient (see \cite{J}), making it a useful criterion to search for examples.
\end{remark}
\begin{remark}\label{exceptional}
The exceptional set of a crepant resolution of an orbifold singularity is always divisorial, i.e. in codimension 1. Indeed, it is known that orbifold singularities are "mild", meaning that (see for example \cite{KM}) every orbifold is normal and $\mathbb{Q}$-factorial. If we now assume by contradiction the existence of a (quasi-projective) small resolution $f:M \rightarrow \tilde{M}$ of an orbifold, and take a Cartier divisor $H \subseteq M$ that is non-trivial on a curve contracted by $f$, we can take $m \in \mathbb{Z}\setminus\{0\}$ such that $mf_*H$ is a Cartier divisor (since $\tilde{M}$ is $\mathbb{Q}$-factorial). But now the smallness of the resolution implies that $mH$ and $f^*(mf_*H)$ coincide, giving in particular that $mH$ is a pullback through $f$, which is a contradiction since $H$ was chosen to be non-trivial on a curve contracted by $f$.
\end{remark}

We will also assume that $\tilde{M}$ is equipped with a singular balanced Chern-Ricci flat metric $\tilde{\omega}$, and thus it is worth giving examples of spaces that satisfy our assumptions, in order to ensure that we are working on an actually existing class of spaces.

\begin{esempio}
A first, trivial example is the one of quotients of tori with isolated orbifold singularities of the form $\mathbb{C}^3/\mathbb{Z}_3$. In these cases, we know that the quotient is equipped with a singular Kähler Calabi-Yau metric, and D. Joyce (in \cite{J}, for example) has shown that also their crepant resolutions admit Kähler Calabi-Yau metrics, which can be obtained via gluing construction in the same fashion as the one we are about to present. However, since every Kähler Ricci-flat metric is also balanced Chern-Ricci flat, we can still consider these spaces in our class, and - as we will se ahead - our construction does not ensure that the Chern-Ricci flat balanced metric obtained need to coincide with the Kähler Calabi-Yau, since the cohomology class preserved is going to be the balanced one, on which there are no known uniqueness results.\\
A possible variation on this argument could be to apply the (orbifold version of) the result of Tosatti and Weinkove in \cite{TW1}, which ensures us that we can find a Chern-Ricci flat balanced metric on the singular quotient of the torus, and thus provides a suitable metric for our construction.
\end{esempio}

\begin{esempio}\label{bty}
A more interesting example can be obtained on torus bundles on some algebraic K3 surfaces. Indeed, Goldstein and Prokushkin produced in \cite{GP} a family of $T^2$ bundles on $K3$ surfaces that do not admit Kähler metrics; and they showed that these threefolds can be endowed with a balanced Chern-Ricci flat metric of the form
\[
\eta=\pi^*\eta_{K3}+\frac{i}{2}\theta\wedge\overline{\theta},
\]
where $\eta_{K3}$ is the Calabi-Yau metric on the $K3$, and $\theta$ is a $(1,0)$-form arising from the duals of the horizontal lift of the coordinate vector fields on the $K3$. These bundles $X$ inherit also a non-Kähler Calabi-Yau structure, i.e. a holomorphic volume form given by 
\[
\Omega=\Omega_{K3}\wedge \theta.
\]
Now, while these are the building blocks of the Fu and Yau solutions for the Hull-Strominger system (see \cite{FuY}), Becker, Tseng and Yau constructed (in \cite{BTY}, Section 6) a $\mathbb{Z}_3$ action on a subclass of the aformentioned torus bundles for some special choices of algebraic $K3$'s, of the form
\[
\rho:(z_0,z_1,z_2,z_3,z_4,z )\longrightarrow (\zeta^2 z_0,\zeta^2 z_1,\zeta z_2,z_3,z_4,\zeta^2 z),
\]
with $\zeta$ a cubic root of unity different from $1$, and where the $z_i$s are the homogeneous coordinates of the $\mathbb{P}^3$ in which the $K3$ lies, and $z$ is the fiber coordinate. This action, despite not preserving the Calabi-Yau structures of the base and the fibres, it preserves $\Omega$, together with the Chern-Ricci flat balanced metric $\eta$, producing an orbifold with $9$ isolated singularities of the form $\mathbb{C}^3/\mathbb{Z}_3$, i.e. exactly from the family of orbifolds we are interested in working with.
\end{esempio}

\begin{esempio}\label{st}
A further example comes from an action of $\mathbb{Z}_4$ on the Iwasawa manifold, constructed by Sferruzza and Tomassini in \cite{ST}. In said paper they showed that the action of $Z_4=\langle \sigma \rangle$ on $\mathbb{C}^3$, where
\[
\sigma(z_1,z_2,z_3):=(iz_1,iz_2,-z_3),
\]
descends to the quotient corresponding the (standard) Iwasawa manifold, producing 16 isolated singular points. Moreover, if we recall the standard coframe of invariant (with respect to the Heisenberg group operation) 1-forms
\[
\varphi_1:=dz_1, \quad \varphi_2:=dz_2, \quad \varphi_3:=dz_3-z_2dz_1,
\]
this can be used to construct a balanced metric $$\omega:=\frac{i}{2}(\varphi_1\wedge\varphi_{\bar 1}+\varphi_2\wedge\varphi_{\bar 2}+\varphi_3\wedge\varphi_{\bar 3}),$$ which descends to a Chern-Ricci flat balanced metric on the Iwasawa manifold, and is clearly invariant through $\sigma$, as well as the standard holomorphic volume of $\mathbb{C}^3$. Thus the quotient of the Iwasawa manifold through this action gives again an orbifold satisfying our hypotheses. 
\end{esempio}

Our aim is to work on the crepant resolution $M$, and obtain via a gluing construction (using Joyce's ALE metrics on the bubble, see \cite{J}) a family of Chern-Ricci flat balanced metrics from $(\tilde{M},\tilde{\omega})$.
In the following we will focus on the construction of the pre-gluing metric on $M$, that will be an \textit{approximately} Chern-Ricci flat balanced metric. To make the presentation more clear, we will divide the process into three natural steps, and for simplicity assume that $\tilde{M}$ has just one singularity (the process obviously applies analogously to the case in which the singularities are any finite number). We are also going to compute explicitly a holomorphic volume form for $M$ (starting from the one on $\tilde{M}$), since such form is a crucial ingredient for the deformation argument in the following section, as it can be used to obtain a global expression for the Chern-Ricci potential.

\subsection{Pre-gluing - Step 1}
We first glue together the metric $\tilde{\omega}$ with the flat metric $\omega_o$ centered at the singularity so that the resulting metric is balanced. This follows actually from the following remark, which holds for any balanced manifold and recovers a weaker version of the strategy used with normal coordinates in the Kähler case.

\begin{lemma}\label{flatcutoff}
Given $(X,\eta)$ an $n$-dimensional balanced orbifold with isolated singularities, for every $x \in X$ it exists a sufficiently small $\varepsilon>0$, coordinates $z$ centered at $x$ and a balanced metric $\eta_\varepsilon$ such that
\[
\eta_\varepsilon=\begin{cases} \omega_o & \text{if} \>\> |z|<\varepsilon \\ \eta & \text{if} \>\> |z|> 2\varepsilon\end{cases},
\]
where $\omega_o$ is the flat metric around $x$, and such that $|\eta_\varepsilon-\omega_o|_{\omega_o}<c\varepsilon$ on $\{\varepsilon \leq |z| \leq 2\varepsilon\}$.
\end{lemma}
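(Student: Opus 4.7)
The plan is to interpolate at the level of the defining $(n-1,n-1)$-forms, exploiting Michelsohn's characterization of balanced metrics as $(n-1)$-th powers of positive $(1,1)$-forms. First, I would fix a local uniformizing chart at $x$ with holomorphic coordinates $z$ centered at $x$ (chosen $\Gamma$-equivariantly if $x$ is a singular point with isotropy $\Gamma\subset U(n)$), arranged so that $\eta_{i\overline{j}}(x)=\delta_{ij}$. Letting $\omega_o$ be the flat metric in these coordinates, $\eta^{n-1}-\omega_o^{n-1}$ is then a $d$-closed real $(n-1,n-1)$-form on a small ball $B_r(0)$ which vanishes at the origin.

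Next, by the local $\partial\overline{\partial}$-Poincaré lemma with quantitative vanishing estimates, I would produce a $\Gamma$-invariant $(n-2,n-2)$-form $\gamma$ on $B_r(0)$ satisfying
\[
\eta^{n-1}-\omega_o^{n-1}=i\partial\overline{\partial}\gamma,
\]
with $|\gamma|=O(|z|^3)$, $|d\gamma|=O(|z|^2)$ and $|i\partial\overline{\partial}\gamma|=O(|z|)$. Taking a cutoff $\chi_\varepsilon$ equal to $1$ on $\{|z|<\varepsilon\}$, vanishing for $|z|>2\varepsilon$, with $|d^k\chi_\varepsilon|\leq C_k\varepsilon^{-k}$, I would then set
\[
\Psi_\varepsilon:=\eta^{n-1}-i\partial\overline{\partial}(\chi_\varepsilon\gamma).
\]
By construction $\Psi_\varepsilon$ is a globally $d$-closed real $(n-1,n-1)$-form which agrees with $\omega_o^{n-1}$ on $\{|z|<\varepsilon\}$ (where both factors of $\chi_\varepsilon$ become irrelevant and the defining identity for $\gamma$ is used) and with $\eta^{n-1}$ outside $\{|z|<2\varepsilon\}$.

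On the transition annulus $\{\varepsilon\leq|z|\leq 2\varepsilon\}$ one rewrites $\Psi_\varepsilon-\omega_o^{n-1}=i\partial\overline{\partial}\bigl((1-\chi_\varepsilon)\gamma\bigr)$ and expands via Leibniz; the three resulting terms are bounded in $\omega_o$-norm respectively by $\varepsilon^{-2}\cdot\varepsilon^3$, $\varepsilon^{-1}\cdot\varepsilon^2$ and $O(|z|)$, each of order $O(\varepsilon)$. Hence $\Psi_\varepsilon$ is an $O(\varepsilon)$-perturbation of $\omega_o^{n-1}$ on the annulus.

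To conclude, I would invoke Michelsohn's structural result that the map $\omega\mapsto\omega^{n-1}$ is a smooth bijection from positive $(1,1)$-forms onto an open cone of strictly positive real $(n-1,n-1)$-forms. Since $\Psi_\varepsilon$ is a genuine $(n-1)$-th power outside the annulus and an $O(\varepsilon)$-perturbation of one on it, for $\varepsilon$ small enough $\Psi_\varepsilon$ lies in this cone, so I would define $\eta_\varepsilon:=(\Psi_\varepsilon)^{1/(n-1)}$. This $\eta_\varepsilon$ is hermitian, balanced (because $d\eta_\varepsilon^{n-1}=d\Psi_\varepsilon=0$), matches $\omega_o$ and $\eta$ on the prescribed regions, and the required pointwise bound on the annulus follows from continuity of the $(n-1)$-th root composed with the estimates above. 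The main obstacle will be producing the potential $\gamma$ with enough vanishing at the origin to keep Michelsohn's positivity stable through the cutoff: this hinges both on the osculation $\eta(x)=\omega_o(x)$ secured at the start and on a quantitative Poincaré homotopy operator that controls a primitive by the vanishing order of the closed form it trivialises.
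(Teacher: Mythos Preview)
Your approach is correct and follows the same core strategy as the paper: interpolate at the level of the $(n-1,n-1)$-forms, exploit closedness of $\eta^{n-1}-\omega_o^{n-1}$ to write it via a potential, cut off the potential, and then appeal to Michelsohn's root map to recover a balanced metric. The only substantive difference is in the choice of potential. The paper writes $\eta^{n-1}-\omega_o^{n-1}=d\beta$ using the ordinary Poincar\'e lemma and argues directly that one may take $\beta=O(|z|^2)$ (by discarding the closed linear part of any primitive); the glued form is then $\omega_o^{n-1}+d(\chi_\varepsilon\beta)$ and only first derivatives of the cutoff enter the estimate. You instead invoke the local $\partial\overline{\partial}$-lemma to produce an $(n-2,n-2)$-potential $\gamma$ with $|\gamma|=O(|z|^3)$, which then compensates for the $\varepsilon^{-2}$ coming from the second derivatives of $\chi_\varepsilon$. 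Your route has the cosmetic advantage that $i\partial\overline{\partial}(\chi_\varepsilon\gamma)$ is manifestly of pure bidegree $(n-1,n-1)$, so Michelsohn applies without further comment; the paper's route is more elementary in that it only needs the ordinary homotopy operator and one fewer order of vanishing. The cubic vanishing of $\gamma$ you need is indeed obtainable by iterating the quantitative homotopy (first $d$, then $\overline{\partial}$), but you should make this explicit rather than leave it as a black box, since it is the crux of your positivity argument on the annulus.
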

\begin{proof}
If $(X,\eta)$ is an $n$-dimensional balanced orbifold and we fix any point $x \in M$, we can choose coordinates $z$ around $x$ such that, in a sufficiently small neighborhood of the point, we can write
\[
\eta=\omega_o+O(|z|),
\]
where $\omega_o$ is the flat metric in a neighborhood of $x$ in the coordinates $z$, and $O(|z|)$ is some form decaying at least linearly at the point $x$. But now this means that if we take the $n-1$ power we obtain
\[
\eta^{n-1}=\omega_o^{n-1}+\alpha,
\]
where $\alpha$ is a closed $(n-1,n-1)$-form (thanks to the facts that $\eta$ is balanced and $\omega_o$ is Kähler) such that $\alpha=O(|z|)$. Thus if we restrict to a simply connected neighborhood of $x$, it exists a real $(n-2,n-2)$ form $\beta$ such that 
\[
\alpha=i\p\bpa\beta,
\]
and it can be chosen to be such that $\beta=O(|z|^3)$. Hence, if we introduce a cut-off function
\[
\chi(y):=
\begin{cases} 
0 & \text{if} \>y\leq 1 \\ 
\text{non decreasing} & \text{if} \> 1<y<2 \\ 
1 & \text{if} \> y \geq 2
\end{cases}
\]
and call $r(z):=|z|$ the (flat) distance from $x$, we can take $\chi_\varepsilon(y):=\chi(y/\varepsilon)$ and define
\[
\eta_\varepsilon^{n-1}:=\omega_o^{n-1}+i\p\bpa(\chi_\varepsilon(r)\beta).
\]
Here, the notation $\eta_\varepsilon^{n-1}$ makes sense thanks to \cite{M}, since on the gluing region holds
\[
|i\p\bpa(\chi_\varepsilon(r)\beta)|\leq |i\p\bpa\chi_\varepsilon||\beta|+|\p\chi_\varepsilon||\p\beta|+|\chi_\varepsilon||i\p\bpa\beta|\leq c\varepsilon,
\]
ensuring that $\eta_\varepsilon^{n-1}>0$. Thus we have obtained a balanced metric $\eta_\varepsilon$ on $X\setminus\{x\}$ which is exactly flat in a neighborhood of $x$. The same argument applies to the orbifold points after taking a cover chart.
\end{proof}
This also shows that, for example, there is a \textit{canonical} choice of balanced metric on the blow-up at a point of a balanced manifold, since thanks to this construction, any balanced metric can be glued to the Burns-Simanca metric preserving the balanced condition.\\ 

Thus we can start from our Chern-Ricci flat balanced metric $\tilde{\omega}$ on $\tilde{M}$ and obtain the corresponding cut-off metric $\tilde{\omega}_\varepsilon$ in a neighborhood of the orbifold singularity $x$ by chosing coordinates $z$ on the orbifold cover chart. For our construction, it will however be more convenient to slightly vary the cut-off function and, for $p>0$, choose
\[
\chi_{\varepsilon,p}(y):=\chi(y/\varepsilon^p)
\]
so that the gluing region for $\tilde{\omega}_\varepsilon$ becomes $\{\varepsilon^p < r <2\varepsilon^p\}$.
Also, using again the results in \cite{M}, we can notice that, even though we are cutting at the level of $(n-1,n-1)$-forms, we have that on the gluing region the metric keeps being close to the flat metric, indeed:
\begin{remark}\label{michelsohn}
Notice that we can choose a basis $\{e_j\}$ of $1$-forms diagonalizing simultaneously $\omega_o$ (we can actually assume it to be the identity) and $\tilde{\omega}_\varepsilon$; this means that also $\omega_o^{n-1}$ and $\tilde{\omega}_\varepsilon^{n-1}$ are diagonal (in the sense of $(n-1,n-1)$-forms, implying that also the term $O(r)$ is necessarily diagonal with respect to this basis. Thus we can write
\[
\tilde{\omega}_\varepsilon^{n-1}=\sum_{j=1}^n (1+O(r))\widehat{e_j\wedge Je_j}
\]
and applying Michelson's result with $\Lambda_j=1+O(r)$, we obtain $\tilde{\omega}_\varepsilon=\sum_{j=1}^n\lambda_j e_j\wedge Je_j$, with
\[
\lambda_j=\frac{((1+O(r))\cdots (1+O(r)))^{\frac{1}{n-1}}}{1+O(r)}=1+O(r),
\]
which implies, again thanks to Michelson's theorem
\[
\omega=\sum_{j=1}^n\left(1+O(r)\right) e_j\wedge Je_j =\omega_o+O(r),
\]
showing also that $d\omega$ has uniformly bounded norm.
\end{remark}

\subsection{Pre-gluing - Step 2}
In this second step we instead perform the gluing between Joyce's Kähler-Ricci flat ALE metric $\omega_{ALE}$ recalled in Section 2 and the flat metric $\omega_o$ of $\mathbb{C}^n$, on the crepant resolution $\hat{X}$ of the singular model $\mathbb{C}^n/G$, and we will actually be able to do it without losing the Kähler condition. To do this we recall that away from the singularity holds
\[
\omega_{ALE}=\omega_o+Ai\partial\overline{\partial}(r^{2-2n}+o(r^{2-2n})),
\]
where $A>0$ is a constant and $r$ is the (flat) distance from the singularity.
This suggests introducing a large parameter $R$ and a smooth cut-off function $\chi_R(x):=\chi_2(x/R)$ on $[0,+\infty)$ such that
\[
\chi_2(y):=
\begin{cases} 
1 \qquad & \text{if} \> y\leq \frac{1}{4}, \\ 
\text{Non increasing} \quad & \text{if} \> \frac{1}{4} < y < \frac{1}{2} , \\ 
0 \qquad & \text{if} \> y\geq \frac{1}{4}, 
\end{cases}
\]
from which we introduce the family of closed $(1,1)$-forms
\[
\omega_R=\omega_o+i\partial\overline{\partial}(\chi_R(r)(r^{2-2n}+o(r^{2-2n})).
\]
Once again,  on the gluing region $G_R:=\{\frac{R}{4}\leq r \leq \frac{R}{2}\}$ we have
\[
|\omega_R- \omega_o|_{\omega_o} \leq |i\partial\overline{\partial}(\chi_R(r)(r^{2-2n}+o(r^{2-2n}))|_{\omega_o} \leq cR^{-2n} \leq cr^{-2n},
\]
which clearly implies the positivity of $\omega_R$ also on $G_R$ (as long as $R$ is chosen to be sufficiently large) ensuring that $\omega_R$ is a Kähler metric on $\hat{X}$ which is exactly flat outside of a compact set.

\subsection{Pre-gluing - Step 3}
In this third and last step we want to glue together the metrics $\tilde{\omega}_\varepsilon$ from Step 1 with the metric $\omega_R$ from Step 2 by matching isometrically the exactly conical regions. In order to do this we are going to need to rescale by a constant $\lambda>0$ the metric on $\hat{X}$, and we will now see that this constant is a geometric constant, since it is dictated by the geometries of the two metrics we are gluing together.\\
In what follows we will denote with $z$ the coordinates on $M_{reg}$ nearby the singularity and with $\zeta$ the coordinates on $\hat{X}$, both given by the identification with the singularity model $\mathbb{C}^n/G$. We then consider the regions
\[
C_R:=\{R/4\leq r(\zeta) \leq 2R\} \subseteq \hat{X} \qquad \text{and} \qquad C_\varepsilon := \{\varepsilon^p/4 \leq r(z) \leq 2\varepsilon^p \} \subseteq M_{reg}
\]
and define a biholomorphism between them by imposing
\[
\zeta=\left(\frac{R}{\varepsilon^p}\right) z.
\]
From this expression we have that on the identified region the following identity holds
\[
r(\zeta)=r\left(\left(\frac{R}{\varepsilon^p}\right)z\right)=\frac{R}{\varepsilon^p}r(z)
\]
which yields $\lambda=\lambda(\varepsilon,R):=\left(\frac{\varepsilon^p}{R}\right)^2$. From this follows $\lambda r^2(\zeta)=r^2(z)$, and thus on the identified conical regions $C_R':=\{R\leq r(\zeta) \leq 2R\}\simeq  \{\varepsilon^p \leq r(z) \leq 2\varepsilon^p \}=:C_\varepsilon'$ holds
\[
\lambda \omega_o(\zeta)=\omega_o(z), \qquad \text{and consequently} \qquad \lambda \omega_R = \tilde{\omega}_\varepsilon.
\]
Hence, $\lambda$ is the needed rescaling factor, which allows us to define the glued family of balanced metrics on the crepant resolution $M$ as
\[
\omega_{\varepsilon,R}:=
\begin{cases} \lambda\omega_R \qquad & \text{on} \> r(\zeta)\leq R, \\
\omega_o \qquad & \text{on}\> \varepsilon^p \leq r(z) \leq 2\varepsilon^p, \\ 
\tilde{\omega}_\varepsilon \qquad & \text{on} \> r(z) \geq 2\varepsilon^p.
\end{cases}
\]

\begin{remark}
Notice that this first construction implies an Alessandrini-Bassanelli type result (see \cite{AB1}) since it shows that any compact complex manifold bimeromorphic to a balanced orbifold with isolated singularities is also balanced.
\end{remark}

In order to understand better the geometry of this new family of metrics, we shall obtain again some estimates on its distance from the flat metric on the gluing region, and since inside said region there is also an exactly flat part - whose geometry is also understood - which separates the two gluing regions from the first two steps, we can just estimate the distance separately on the two regions from the previous steps and then take the maximum.\\
Clearly, the metric is unaltered on the gluing region from Step 1, thus we still have on $G_\varepsilon$ that
\[
|\nabla_{\omega_o}^k (\omega -\omega_o)|_{\omega_o}\leq cr^{1-k},
\]
for all $k \geq 0$.\\
On the other hand, since in this step we had to rescale the metric on $\hat{X}$, we have to check how it has affected the distance from the cone. To have clearer estimates, we will express also this one in terms of the small coordinates $z$, and we will relate the parameters $R$ and $\varepsilon$ by chosing $R=\varepsilon^{-q}$, with $q>0$. We first notice that on $G_R$ (actually the corresponding region through the biholomorphism) it holds
\[
\begin{aligned}
\langle \omega_{\varepsilon,R}-\omega_o,\omega_{\varepsilon,R}-\omega_o\rangle_{\omega_o}(z)= & \> \lambda^{-2}\langle \lambda(\omega_R-\omega_o),\lambda(\omega_R-\omega_o)\rangle_{\omega_o}(\zeta) \\
= & \>\langle \omega_R-\omega_o,\omega_R-\omega_o\rangle_{\omega_o}(\zeta)
\end{aligned}
\] 
implying that $|\omega_{\varepsilon,R}-\omega_o|_{\omega_o}(z)=|\omega_R-\omega_o|_{\omega_o}(\zeta)$. From here, we can recall the estimate done in Step 2 and obtain
\[
\begin{aligned}
|\omega_{\varepsilon,R}-\omega_o|_{\omega_o}(z)\leq & |\omega_R-\omega_o|_{\omega_o}(\zeta) \\ 
 \leq & \> cr^{-2n}(\zeta) = c\varepsilon^{2nq} \leq c r^{2nq/p}(z).
\end{aligned}
\]
which implies, on the whole gluing region, that for all $k \geq 0$ holds
\[
|\nabla_{\omega_o}^k (\omega_{\varepsilon,R} - \omega_o)|_{\omega_o} \leq cr^{m-k},
\]
where $m=\min\{1, 2nq/p\}$.\par 
From now on, to simplify the notation, we shall denote $\omega_{\varepsilon,R}$ just with $\omega$.

\subsection{The Chern-Ricci potential}

In order to use this description of the metrics to estimate the Chern-Ricci potential on the gluing region we are also going to need to understand how the holomorphic volume form of the resolution is related to the holomorphic volume of our background Calabi-Yau orbifold.\\
Before doing it we start by fixing some notation. Denote
\begin{itemize}
\item[•] with $\tilde{\Omega}$ the holomorphic volume of $M_{reg}$ such that $$\tilde{\omega}^n=i\tilde{\Omega}\wedge\overline{\tilde{\Omega}};$$
\item[•] with $\hat{\Omega}$ the rescaled holomorphic volume of the singularity model $\mathbb{C}^n/G$ (and its crepant resolution $\hat{X}$) in order to match the metric rescaling, i.e. $\hat{\Omega}:=\lambda^{n/2}\Omega_o$ where $$(\omega_{ALE})^n=i\Omega_o\wedge\overline{\Omega}_o.$$
\end{itemize}

Now, in a neighborhood of the singularity it exists a holomorphic function $h$ such that $$\tilde{\Omega}=h\Omega_o.$$ 
On the other hand, under the rescaling biholomorphism that glues $\hat{X}$ to $M\setminus\{x\}$, we identify the $\Omega_o$ around the singularity with $\hat{\Omega}$, thus we can read $h$ as a holomorphic function on the singularity model, and hence holomorphically extend it to a holomorphic function on the whole $\hat{X}$, and thus we can glue together $h\hat{\Omega}$ with $\tilde{\Omega}$ to obtain $\Omega$ a holomorphic volume for $M$.\\
We can also obtain information on $h$ by noticing that, since $\tilde{\omega}$ is asymptotic to $\omega_o$ around the singularity, we obtain that around $x$ it holds
\[
(1+O(|z|))\omega_o^n=\tilde{\omega}^n=i\tilde{\Omega}\wedge \overline{\tilde{\Omega}}=|h|^2i\Omega_o\wedge
\overline{\Omega}_o=|h|^2\omega_o^n
\]
from which follows
\[
|h|=1+O(r),
\]
from which, by continuity, we have that $|h|^2\equiv 1$ on the exceptional part.\\ 
Thus we can define a global Chern-Ricci potential as
\[
f=f_{p,q,\varepsilon}:=\log\left(\frac{i\Omega\wedge\overline{\Omega}}{\omega^n}\right).
\]
and conclude this section by describing the behaviour of $f$ in all the regions of $M$, to show that it is suitable to apply a deformation argument similar to the one done in \cite{BM}. We have

\begin{itemize}
\item[•] on $\{r(z) > 2\varepsilon^p\}$ hold $\omega=\tilde{\omega}$ and $\Omega=\tilde{\Omega}$, thus $f\equiv 0$;
\item[•] on $\{\varepsilon^p \leq r(z) \leq 2\varepsilon^p\}$ hold $\omega=\omega_o+O(r)$ and $\Omega\wedge\overline{\Omega}=\Omega_o\wedge \overline{\Omega_o}+O(r)$, from which we have
\[
f=\log\left(\frac{\omega_o^n+O(r)}{\Omega_o\wedge \overline{\Omega_o}+O(r)}\right)=\log(1+O(r))=O(r);
\]
\item[•] on $\{\frac{1}{2}\varepsilon^p \leq r(z) \leq \varepsilon^p\}$ hold $\omega=\omega_o$ and $\Omega\wedge\overline{\Omega}=i(1+O(r))\Omega_o\wedge \overline{\Omega_o}$, from which follows $f=O(r)$;
\item[•] on $\{\frac{1}{4}\varepsilon^p/2 \leq r(z) \leq \frac{1}{2}\varepsilon^p\}$ hold $\omega=\omega_o+O(r^{2nq/p})$ and $\Omega\wedge\overline{\Omega}=\Omega_o\wedge \overline{\Omega_o}+O(r)$, implying $f=O(r^m)$;
\item[•] on $\{r(z) < \varepsilon^p/2 \}$ hold $\omega^n=i\Omega_o\wedge \overline{\Omega}_o$ and $\Omega\wedge\overline{\Omega}=i(1+O(r))\Omega_o\wedge \overline{\Omega_o}$, giving once again $f=O(r)$.
\end{itemize}
Thus we can write globally (on $M$) that
\[
|f|\leq cr^m,
\]
ensuring that the metric $\omega$ is an \textit{approximately Chern-Ricci flat} balanced metric (as wanted in Proposition \ref{approxbcrf}), hence a suitable one to perform our gluing construction.

\section{The deformation argument}

In this section we will see that what was built in the previous section are exactly the ingredients we need to introduce a deformation argument in the same fashion as \cite{BM}, in order to obtain a balanced Chern-Ricci flat metric on our crepant resolution $M$. We will also analyze the cohomology class of the metric obtained and see why said metric is interested in the framework of the Hull-Strominger system.

\subsection{The strategy}

We will now set up the problem for this section. First of all we recall the deformation of the metric that preserves the balanced condition introduced in \cite{FWW} (here taken with a particular ansatz):
\[
\omega_\psi^{n-1}:=\omega^{n-1}+i\partial\overline{\partial}(\psi\omega^{n-2}), \quad \psi \in C^{\infty}(M,\mathbb{R}) \>\> \text{such that} \>\> \omega_\psi^{n-1} >0.
\]
Thus the problem we are interested in solving, following what was done in \cite{BM}, is the \textit{balanced Monge-Ampère type} equation
\begin{equation}\label{balma}
\omega_\psi^n=e^f\omega^n
\end{equation}
for $\psi \in C^{\infty}(M,\mathbb{R})$ such that $\omega_\psi^{n-1} >0$.
\begin{remark}
The equation introduced above makes sense, because, as we've seen, $f=O(r^m)$, thus $e^f=1+O(r^m)$, meaning that $e^f\omega^n$ is nearby $\omega^n$ itself, hence it makes sense to try to obtain it as a small deformation of $\omega$.
\end{remark}

For practicality, it is useful to reformulate our equation as an operator on the space of smooth functions, thus we introduce $F:C^{\infty}(M,\mathbb{R}) \rightarrow C^{\infty}(M,\mathbb{R})$ as
\[
F(\psi)=F_\varepsilon(\psi):=\frac{\omega_\psi^n}{\omega^n}-e^f.
\]
Our aim is then to solve the equation $F(\psi)=0$ - which is equivalent to \eqref{balma} -  through a fixed point argument, hence the first step to take towards this argument is to compute the linearization at $0$ of the operator $F$. To do this we shall introduce the notation $\omega_0':=\frac{d}{dt}_{|_{t=0}}\omega_{tu}$, where $\omega_{tu}$ is the curve corresponding to the tangent vector $u \in C^{\infty}(M,\mathbb{R})$, and compute the derivative at zero of $\omega_{tu}^n$ in two different ways:
\begin{itemize}
\item[] $$\frac{d}{dt}_{|_{t=0}}\omega_{tu}^n=n\omega^{n-1}\wedge\omega_0';$$
\item[] $$\frac{d}{dt}_{|_{t=0}}\omega_{tu}^n=i\partial\overline{\partial}(u\omega^{n-2})\wedge \omega + \omega^{n-1} \wedge\omega_0'.$$
\end{itemize}
Even though none of these two expressions are explicit, we can put them together to obtain an explicit one for the linearization, that is
\[
\begin{aligned}
Lu:=L_\varepsilon u=d_0F(u)= \frac{n}{n-1} \frac{i\partial\overline{\partial}(u \omega^{n-2})\wedge\omega}{\omega^n}.
\end{aligned}
\]
Here we can work through a few computations to get a clearer (and much more understandable) expression for the operator.
\begin{lemma}\label{balancedoperator}
The linearized operator $L$ can be written as
\begin{equation}\label{linbma}
Lu=\frac{1}{n-1}\left( \Delta_\omega u+\frac{1}{n-1}|\p\omega|_\omega^2u\right) 
\end{equation}
for all $u \in C^\infty(M)$, where $\Delta_\omega$ denotes the Hodge laplacian taken with negative eigenvalues.
\end{lemma}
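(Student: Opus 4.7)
The plan is to expand $i\partial\overline{\partial}(u\,\omega^{n-2})$ using the Leibniz rule, exploit the balanced condition to discard the middle (first-derivative) terms, identify the pure Laplacian piece, and then reduce the remaining zero-order coefficient to $|d\omega|_\omega^2$ by a pointwise calculation.

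First I would write
\[
i\partial\overline{\partial}(u\,\omega^{n-2}) = i\partial\overline{\partial}u\wedge\omega^{n-2} + i\partial u\wedge\overline{\partial}\omega^{n-2} - i\overline{\partial}u\wedge\partial\omega^{n-2} + u\cdot i\partial\overline{\partial}\omega^{n-2},
\]
and then wedge with $\omega$. The balanced condition $d\omega^{n-1}=0$ gives $\partial\omega^{n-1}=(n-1)\,\partial\omega\wedge\omega^{n-2}=0$, hence $\partial\omega^{n-2}\wedge\omega=(n-2)\,\partial\omega\wedge\omega^{n-2}=0$, and similarly $\overline{\partial}\omega^{n-2}\wedge\omega=0$. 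This kills both middle terms, leaving only the pure Hessian and a zero-order piece.

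For the first term, the standard identity $\alpha\wedge\omega^{n-1}=\tfrac{1}{n}(\mathrm{tr}_\omega\alpha)\,\omega^n$ for any $(1,1)$-form $\alpha$, applied to $\alpha=i\partial\overline{\partial}u$, produces the Chern Laplacian $\Delta_\omega u$; the factor $\tfrac{n}{n-1}$ in the definition of $L$ is arranged so that this contribution to $Lu$ is exactly $\tfrac{n}{n-1}\Delta_\omega u$, matching the first term on the right-hand side of \eqref{linbma}.

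The remaining and main step is to identify $\tfrac{i\partial\overline{\partial}\omega^{n-2}\wedge\omega}{\omega^n}$ with a multiple of $|d\omega|_\omega^2$. The key input is again the balanced condition, now differentiated: from $\partial\overline{\partial}\omega^{n-1}=0$ and $\omega^{n-1}=\omega^{n-2}\wedge\omega$ one obtains
\[
\partial\overline{\partial}\omega^{n-2}\wedge\omega = -\omega^{n-2}\wedge\partial\overline{\partial}\omega + \overline{\partial}\omega^{n-2}\wedge\partial\omega - \partial\omega^{n-2}\wedge\overline{\partial}\omega,
\]
and using $\partial\omega^{n-2}=(n-2)\omega^{n-3}\wedge\partial\omega$ (and its conjugate) together with $\partial\omega\wedge\omega^{n-2}=0$, the right-hand side reduces to a multiple of $\omega^{n-3}\wedge\partial\omega\wedge\overline{\partial}\omega$ plus $\omega^{n-2}\wedge\partial\overline{\partial}\omega$. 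Evaluating this on a local unitary coframe for $\omega$ (in which $|d\omega|_\omega^2=2|\partial\omega|_\omega^2$), one recognises $\omega^{n-3}\wedge\partial\omega\wedge\overline{\partial}\omega/\omega^n$ as a constant multiple of $|\partial\omega|_\omega^2$ and verifies that the combined coefficient is exactly $-\tfrac{1}{n!}\cdot\tfrac{n-1}{n}$, which after multiplication by $\tfrac{n}{n-1}$ yields the stated $-\tfrac{1}{n!}|d\omega|_\omega^2 u$.

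The main obstacle is precisely this last pointwise bookkeeping: the Leibniz expansion, the vanishing of the two middle terms from the balanced condition, and the extraction of the Laplacian are straightforward, but pinning down the exact constant in front of $|d\omega|_\omega^2$ requires a careful combinatorial computation in a unitary coframe, or equivalently a careful use of the Hodge identities $[\Lambda,\partial]=-i\overline{\partial}^\ast$ adapted to the Hermitian (non-Kähler) setting, where the torsion contribution to $\Delta_\omega$ is exactly the term involving $|d\omega|_\omega^2$.
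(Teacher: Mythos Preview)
Your approach is essentially the same as the paper's: expand $i\partial\overline{\partial}(u\,\omega^{n-2})$ via Leibniz, wedge with $\omega$, use the balanced identity $\partial\omega\wedge\omega^{n-2}=0$ to kill the first-order terms, extract the Laplacian from $i\partial\overline{\partial}u\wedge\omega^{n-1}$, and then identify the zero-order coefficient with $|d\omega|_\omega^2$.

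The one substantive difference lies in the last step, which you rightly flag as the main obstacle. You propose a direct pointwise computation in a unitary coframe (or alternatively the Hermitian Hodge identities) to pin down the constant. The paper instead proceeds in two cleaner moves. First, it applies $\partial$ to the relation $\overline{\partial}\omega\wedge\omega^{n-2}=0$ to obtain
\[
i\partial\overline{\partial}\omega\wedge\omega^{n-2}=(n-2)\,i\overline{\partial}\omega\wedge\partial\omega\wedge\omega^{n-3},
\]
which collapses the zero-order part to the single term $u\,i\partial\overline{\partial}\omega\wedge\omega^{n-2}$. Second, rather than a coframe computation, it invokes the Hodge-star identity $\ast_\omega\omega^2=c_n\,\omega^{n-2}$ together with formula (2.13) of Alexandrov--Ivanov \cite{AI} (specialized to the balanced case) to read off directly
\[
2i\partial\overline{\partial}\omega\wedge\omega^{n-2}=dd^c\omega\wedge\ast_\omega\omega^2\big/ c_n = -\tfrac{2}{n!}\,|d\omega|_\omega^2\,\omega^n .
\]
This bypasses the combinatorics entirely and gives the constant at once; it is worth knowing as a shortcut, since the coframe bookkeeping you anticipate is indeed error-prone.
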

\begin{proof}
For all $n \geq 3$ it holds
\[
\begin{aligned}
i\partial\overline{\partial}(u\omega^{n-2})= & i\partial(\overline{\partial}u\wedge \omega^{n-2}+(n-2)u\overline{\partial}\omega\wedge \omega^{n-3}) \\
= & i\partial\overline{\partial}u\wedge\omega^{n-2}-(n-2)i\overline{\partial}u\wedge \partial \omega\wedge\omega^{n-3}+(n-2)i\partial u \wedge \overline{\partial}\omega \wedge \omega^{n-3} \\
& + (n-2)ui\partial\overline{\partial}\omega\wedge \omega^{n-3}-(n-2)(n-3)ui\overline{\partial}\omega\wedge\partial\omega\wedge \omega^{n-4},
\end{aligned}
\]
and since the balanced condition $d\omega^{n-1}=0$ implies $\partial\omega\wedge\omega^{n-2}=0$, we get
\[
\begin{aligned}
& i\partial\overline{\partial}(u\omega^{n-2})\wedge\omega \\
& = i\partial\overline{\partial}u\wedge\omega^{n-1}+ (n-2)ui\partial\overline{\partial}\omega\wedge \omega^{n-2}-(n-2)(n-3)ui\overline{\partial}\omega\wedge\partial\omega\wedge \omega^{n-3} \\
& = \frac{1}{n}(\Delta_\omega u)\omega^n+(n-2)u(i\partial\overline{\partial}\omega\wedge \omega^{n-2}-(n-3)i\overline{\partial}\omega\wedge\partial\omega\wedge \omega^{n-3}).
\end{aligned}
\]
Now, applying the operator $\partial$ to the identity $\overline{\partial}\omega\wedge\omega^{n-2}=0$, we get
\[
0=i\partial\overline{\partial}\omega\wedge\omega^{n-2}-(n-2)i\overline{\partial}\omega\wedge\partial\omega\wedge\omega^{n-3},
\]
that is
\[
i\partial\overline{\partial}\omega\wedge\omega^{n-2}=(n-2)i\overline{\partial}\omega\wedge\partial\omega\wedge\omega^{n-3},
\]
giving us
\[
i\partial\overline{\partial}(u\omega^{n-2})\wedge\omega=(\Delta_\omega u)\omega^n+ui\partial\overline{\partial}\omega\wedge\omega^{n-2}.
\]
On the other hand, by definition of $d^c$ it holds $dd^c\omega=2i\partial\overline{\partial}\omega$, and applying formula [(2.13) from \cite{AI}] in the balanced case and using the definition of the Hodge-$\ast$ operator we get
\[
2|\partial\omega|_\omega^2\omega^n=\langle dd^c\omega,\omega^2 \rangle_\omega \omega^n=dd^c\omega \wedge \ast_\omega \omega^2=n!2i\partial\overline{\partial}\omega\wedge\omega^{n-2},
\]
from which we finally obtain the \textit{linearized balanced Monge-Ampère type} operator
\[
Lu=\frac{1}{n-1}\left( \Delta_\omega u+\frac{1}{n-1}|\p\omega|_\omega^2u\right),
\]
and we can clearly notice that it is bounded (using Remark \ref{michelsohn}) and $L^2$-self adjoint.
\end{proof}

\subsection{Weighted analysis}

Our aim is now to study the invertibility of the linear operator $L$, and we wish to do this in suitable weighted functional spaces. In order to introduce said spaces we shall start by introducing a weight function useful in our situation, and for simplicity we may assume that the neighbourhood of $x$ on which the $z$ coordinates are defined contains the region $\{r(z)\leq 1\}$ (this is true up to a rescaling). Define then
\[
\rho=\rho_\varepsilon(z):=
\begin{cases} 
\varepsilon^{p+q} & \text{on} \> r(z)\leq \varepsilon^{p+q}, \\
\text{non decreasing} & \text{on} \> \varepsilon^{p+q} \leq r(z) \leq 2\varepsilon^{p+q}, \\
r(z) & \text{on} \> 2\varepsilon^{p+q} \leq r(z) \leq 1/2, \\
\text{non decreasing} & \text{on} \> 1/2 \leq r(z) \leq 1,\\
1 & \text{on} \> r(z) \geq 1,
\end{cases}
\]
Using this weight function we can introduce the weighted Hölder norm and its corresponding weighted Hölder spaces $C_{\varepsilon,b}^{k,\alpha}(M)$, where $k\geq 0$, $\alpha \in (0,1)$ is the Hölder constant, $b \in \mathbb{R}$ is the weight and $\varepsilon$ indicates the dependence on the metric $\omega$ obtained by the gluing construction done above. We define
\[
\begin{aligned}
||u||_{C_{\varepsilon,b}^{k,\alpha}(M)}:= & \sum_{i=0}^k \sup_M |\rho^{b+i}\nabla_\varepsilon^iu|_\omega \\ 
& +\underset{d_\varepsilon(x,y) < inj_\varepsilon}{\sup}\left|\min\left(\rho^{b+k+\alpha}(x),\rho^{b+k+\alpha}(y)\right)\frac{\nabla_\varepsilon^ku(x)-\nabla_\varepsilon^ku(y)}{d_\varepsilon(x,y)^\alpha}\right|_\omega,
\end{aligned}
\]
where $inj_\varepsilon$ is the injectivity radius of the metric $\omega$, and thus interpret $F$ (and $L$) as operators defined as $F: C_{\varepsilon,b}^{2,\alpha}(M) \rightarrow C_{\varepsilon,b+2}^{0,\alpha}(M)$. The definition can be extended to tensors in the same way, using the metric naturally induced on tensor bundles.\\

Following then the literature, we first wish to prove the following estimate. We shall however make an initial remark about the hypothesis we will add.
\begin{remark}
    In the following Lemma, we will assume the injectivity of the operator $L$ on the background singular manifold. This is necessary in order to have the injectivity of the operator on the "singular side", and examples in which this happens where recently found in the upcoming paper \cite{FG} by the first author and Elia Fusi. As described in \cite{FG}, we are able to produce on the same manifold metrics for which the injectivity assumption is verified and not verified, and we notice that this is connected to some volume preservation property of the deformation in the chosen ansatz. This gives this kernel a strong geometric nature and hence we expect to be possible to work modulo it (either by restricting the functional space or modifying the equations), or possibly underlying some deeper property to be satisfied by the complex manifold.
\end{remark}
\begin{lemma}\label{estimate}
With the same notations as above, for every $b \in (0,n-1)$ it exists $c>0$ (independent of $\varepsilon$) such that for sufficiently small $\varepsilon$ it holds
\[
||u||_{C_{\varepsilon,b}^{2,\alpha}} \leq c||Lu||_{C_{\varepsilon,b+2}^{0,\alpha}},
\]
for all $u \in C_{\varepsilon,b}^{2,\alpha}$, provided that the operator $L$ has no kernel on the singular manifold $\tilde{M}$.
\end{lemma}
\begin{proof}
Suppose by contradiction that the above inequality does not hold. This means that for all $k \in \mathbb{N}$ we can find $\varepsilon_k >0$ and $u_k \in C_{\varepsilon_k,b}^{2,\alpha}$ such that $\varepsilon_k \rightarrow 0$ as $k \rightarrow 0$, $||u_k||_{C_{\varepsilon_k,b}^{2,\alpha}}=1$ and 
\begin{equation}\label{hpas}
||L u_k||_{C_{\varepsilon_k,b+2}^{0,\alpha}} < \frac{1}{k}.
\end{equation}
In the first place we analyze what happens on $\tilde{M}_{reg}$, i.e. away from the exceptional part. The properties of the sequence $\{u_k\}_{k \in \mathbb{N}}$ guarantee us that we can apply Arzela-Ascoli's Theorem, and hence up to subsequences we may assume $u_k \rightarrow u_\infty$ uniformly on compact subsets of $\tilde{M}_{reg}$ in the sense of $C_b^{0,\alpha}$, with respect to $\tilde{\omega}$. Moreover, since for any compact set $K \subseteq \tilde{M}_{reg}$ there exists $n_K \in \mathbb{N}$ such that for all $k\geq n_K$ on $K$ it holds $\omega=\tilde{\omega}$, and hence $\nabla_\omega=\nabla_{\tilde{\omega}}$, we actually have $C_b^{2,\alpha}$-convergence (again uniformly on compact subsets of $\tilde{M}_{reg}$). Hence, using the hypothesis we can conclude that $u_\infty \equiv 0$.\par 
Let now $M_c:=\{r(z) \geq 1/2\} \subseteq \tilde{M}_{reg}$ be a compact set on which we know that $u_k \rightarrow 0$ uniformly in $C_b^{2,\alpha}$. To obtain a contradiction we want to prove that $\{u_k\}_{k\in \mathbb{N}}$ admits a subsequence uniformly convergent to zero in $C_b^{2,\alpha}$ also on $A:=\{r(z)<1/2\}$.\par
In order to work in this region, it is simpler to shift to the "large" coordinates $\zeta$, i.e. the coordinates on the crepant resolution $\hat{X}$ away from the exceptional part. It is then useful to recall the relations
\[
\zeta=\varepsilon^{-(p+q)}z \quad \text{and} \quad r(z)=\varepsilon^{p+q}r(\zeta),
\]
from which we can write down the explicit identification
\[
\left\{r(z)<\frac{1}{2}\right\}=A \simeq \tilde{A}=\tilde{A}_\varepsilon=\left\{r(\zeta) <\frac{1}{2}\varepsilon^{-(p+q)}\right\}\subseteq \hat{X};
\]
this last set $\tilde{A}$ is the one we will be working on.\par
The first thing to do is rewrite the weight function in terms of this coordinates on $\tilde{A}$, resulting in
\[
\rho=
\begin{cases} 
\varepsilon^{p+q} & \text{on} \> r(\zeta)\leq 1, \\
\text{non decreasing} & \text{on} \> 1 \leq r(\zeta) \leq 2, \\
\varepsilon^{p+q}r(\zeta) & \text{on} \> 2 \leq r(\zeta) \leq 1/2\varepsilon^{-(p+q)}.
\end{cases}
\]
Notice that the entire gluing region of the metric (from the previous step) is entirely contained inside the third region, i.e. $\{2 \leq r(\zeta) \leq 1/2\varepsilon^{-(p+q)}\}$.\par
We now go back to our sequence $\{u_k\}_{k\in \mathbb{N}}$. Since $||u_k||_{C_{\varepsilon_k,b}^{2,\alpha}}=1$ for all $k \in \mathbb{N}$, we have in particular that on all $\tilde{A}_k:=\tilde{A}_{\varepsilon_k}$ holds
\[
|\rho^b u_k|\leq c.
\]
Introducing then the new sequence
\[
U_k:=\varepsilon_k^{b(p+q)}u_k,
\]
the above weighted estimates for $u_k$ imply the following ones for this new sequence:
\[
\begin{cases} 
|U_k|\leq c & \text{on} \> r(\zeta)\leq 1, \\
|U_k|\leq c & \text{on} \> 1 \leq r(\zeta) \leq 2, \\
|U_k|\leq cr^{-b}(\zeta) & \text{on} \> 2 \leq r(\zeta) \leq 1/2\varepsilon_k^{-(p+q)}.
\end{cases}
\]
These estimates for $U_k$ suggest us to introduce a new weight function $\tilde{\rho}=\tilde{\rho}_k$ on $\tilde{A}_k$ given by
\[
\tilde{\rho}(\zeta)=
\begin{cases} 
1 & \text{on} \> r(\zeta)\leq 1, \\
\text{non decreasing} & \text{on} \> 1 \leq r(\zeta) \leq 2, \\
r(\zeta) & \text{on} \> 2 \leq r(\zeta) \leq 1/2\varepsilon_k^{-(p+q)},
\end{cases},
\]
with which we get that
\begin{equation}\label{wnorm}
|\tilde{\rho}^bU_k|\leq c,
\end{equation}
and analogous weighted estimates also for $\nabla U_k$ and $\nabla^2 U_k$, hence again by Ascoli-Arzelà theorem we have that $U_k \rightarrow U_\infty$ uniformly on compact sets of $\hat{X}$ (since $\tilde{A}_k \rightarrow \hat{X}$) in the sense of $\tilde{C}_b^{2,\alpha}=C_b^{2,\alpha}(\tilde{\rho})$, where this last space is the weighted Hölder space on $\hat{X}$ identified by the weight $\tilde{\rho}$ and the metric $\omega_{ALE}$.\par
On the other hand, on any compact subset of $\hat{X}$, for sufficiently large $k$ it holds
\begin{equation}\label{linresc}
\rho^{b+2}L u_k =\tilde{\rho}^{b+2}\Delta_{\omega_{ALE}}U_k,
\end{equation}
and since $\frac{1}{k}>||\tilde{L}u_k||_{C_{\varepsilon_k,b+2}^{0,\alpha}}$, taking the limit in \eqref{linresc} we obtain that $U_\infty$ is harmonic with respect to the ALE metric $\omega_{ALE}$. Moreover, taking the limit in \eqref{wnorm} ensures us that $U_\infty$ decays at infinity, from which follows that $U_\infty \equiv 0$ on the whole $\hat{X}$, and thus $U_k \overset{\tilde{C}_b^{2,\alpha}}{\rightarrow} 0$ uniformly on compact sets of $\hat{X}$.\par
If we are now able to prove that $U_k$ admits a subsequence converging uniformly to zero on the whole $\hat{X}$ in the sense $\tilde{C}_b^0$ we get our contradiction, and we are done. Indeed, if $U_k \overset{\tilde{C}_b^0}{\rightarrow} 0$ uniformly (up to subsequences) on $\hat{X}$, then scaled Schauder estimates imply that also $U_k \overset{\tilde{C}_b^{2,\alpha}}{\rightarrow} 0$ uniformly, which is the same as saying $u_k \overset{C_{\varepsilon_k,b}^{2,\alpha}}{\rightarrow} 0$ uniformly on $\{r(z)< 1/2\}$. Thus $\{u_k\}_{k \in \mathbb{N}}$ up to subsequences is uniformly convergent to zero on the whole manifold $M$, which is a contradiction with the fact that $||u_k||_{C_{\varepsilon_k,b}^{2,\alpha}}=1$ for all $k \in \mathbb{N}$.\par
Now we will prove that the said uniformly convergent subsequence exists. If by contradiction this was not the case, since we have the uniform convergence on compact sets, we would be able to find $\delta >0$ and $\{x_k\}_{k \in \mathbb{N}} \subseteq \hat{X}$, $x_k \in \tilde{A}_k$, such that $R_k:=r(\zeta(x_k)) \rightarrow +\infty$ and $R_k^b U_k(x_k) \geq \delta$ for all $k \in \mathbb{N}$, and since $R_k \rightarrow +\infty$, we can actually assume $\tilde{\rho}\equiv r$ on the points of the sequence, from which we get that for all $k \in \mathbb{N}$ holds
\begin{equation}
R_k^b|U_k(x_k)| \geq \delta.
\end{equation}
Naming then $r_k:=r(z(x_k))$, recalling the relation between the two coordinates we have $\frac{1}{2} \geq r_k=\varepsilon_k^{p+q}R_k$, thus up to subsequences we can end up into two cases:
\begin{itemize}
\item[(i)] if $r_k \rightarrow l >0$, then $x_k \rightarrow x_\infty$, and since $u_k$ is uniformly convergent on compact sets on $\tilde{M}_{reg}$, we get that $u_k(x_k)$ is bounded, giving
\[
0<\delta \leq R_k^bU_k(x_k)=(R_k\varepsilon_k^{p+q})^bu_k(x_k) = r_k^b u_k(x_k) \underset{k\rightarrow \infty}{\longrightarrow} 0,
\]
which is a contradiction;
\item[(ii)] if $r_k \rightarrow 0$, let $X^*:=\hat{X}\setminus E$ the singularity model and $X'$ a copy of $X^*$, and we consider the biholomorphisms $\sigma_k : B_k \rightarrow A\setminus \{0\}$, given by
\[
\sigma_k(z'):=r_nz',
\]
where $B_k:=\{0<r(z')<\frac{r_k^{-1}}{2}\}\subseteq X'$. Then, if we endow $B_k$ with the metric
\[
\theta_k:=r_k^{-2}\sigma_k^*\omega,
\] 
it is easy to notice that the couple $(B_k,\theta_k)$ converges to $(X',\omega_{flat})$, i.e. the standard singularity model. If we then introduce the functions 
\[
w_k:= r_k^b\sigma_k^*u_k
\]
on $B_k$, we notice that the pullback of the weight function $\rho$ gives
\[
\rho'(z')=\sigma_k^*\rho(z')=
\begin{cases} 
\varepsilon_k^{p+q} & \text{on} \> r(z')< R_k^{-1}, \\
\text{non decreasing} & \text{on} \> R_k^{-1} \leq r(z') \leq 2R_k^{-1}, \\
r_k r(z') & \text{on} \> 2R_k^{-1} \leq r(z') < \frac{r_k^{-1}}{2}, 
\end{cases}
\]
from which we get (pulling back the inequality $\rho^b|u_k| \leq 1$)
\begin{equation}\label{limw}
r^b(z')w_k(z') \leq 1 
\end{equation}
on each $z' \in X$ (assuming $k$ to be sufficiently large). Hence, this shows that for any compact $K \subseteq X'$, we can choose $k \in \mathbb{N}$ to be sufficiently large in order to have $K \subseteq B_k$ and $\rho'(z')= r_k r(z')$ on the whole $K$, and get that $w_k$ is uniformly bounded on $K$; and since this works for any compact $K\subseteq X'$, we obtain that - up to subsequences - $\{w_k\}_{k\in\mathbb{N}}$ converges uniformly on compact sets of $X'$ to a function $w_\infty$, and from \eqref{limw} we get that $w_\infty$ is decaying at infinity. Moreover, recalling that $R_k^bU_k(x_k)\geq \delta$ for all $k \in \mathbb{N}$, if we introduce the sequence $y_k:=\sigma_k^{-1}(x_k)$, it is straighforward to notice that from its definition follows that $w_k(y_k)\geq \delta$ and $||y_k||_{\theta_k}=1$ for all $k \in \mathbb{N}$, thus implying that - up to subsequences - $y_k \rightarrow y_\infty \in X'$, and hence
\begin{equation}\label{assurdo}
w_\infty(y_\infty)>0.
\end{equation}
Now, if we recall the definition of the operator $L$ and take the pullback with respect to $\sigma_k$ of $\rho^{b+2}L u_k$, it is immediate to see that on every compact $K \subseteq X'$ we get
\begin{equation}\label{pbop}
\begin{aligned}
\sigma_k^*\left(\rho^{b+2}L u_k\right) & =  \frac{n}{n-1} r^{b+2}(z')  \left(\frac{i\p\bpa w_k  \wedge \theta_k^{n-1}}{\theta_k^n}+|d\theta_k|_{\theta_k}^2w_k\right) \\  
 & =\frac{n}{n-1} r^{b+2}(z')\Delta_{\theta_k}w_k+|d\theta_k|_{\theta_k}^2w_k,
\end{aligned}
\end{equation}
from which we have, taking the limit as $k \rightarrow + \infty$, that
\[
\Delta_{\omega_{flat}}w_\infty\equiv 0 \quad \text{on} \> X',
\]
i.e., $w_\infty$ is harmonic on $X'$ with respect to the flat metric. Thus, since it decays at infinity, we obtain $w_\infty \equiv 0$, which is a contradiction as \eqref{assurdo} holds.
\end{itemize}
Thus the proof is complete.
\end{proof}

As a direct consequence we get
\begin{lemma}\label{iso}
The operator $L: C_{\varepsilon,b}^{2,\alpha}(M) \rightarrow C_{\varepsilon,b+2}^{0,\alpha}(M)$ defined above is a linear isomorphism for every $b \in (0,n-1)$, provided that the operator $L$ has no kernel on the singular manifold $\tilde{M}$.
\end{lemma}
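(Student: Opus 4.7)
The plan is to extract the isomorphism from two independent inputs: the $\varepsilon$-uniform weighted estimate of Lemma \ref{estimate}, and classical elliptic Fredholm theory on the compact manifold $M$.

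First I would establish injectivity, which is immediate from Lemma \ref{estimate}: if $Lu = 0$, then $\|u\|_{C_{\varepsilon,b}^{2,\alpha}} \leq c\|Lu\|_{C_{\varepsilon,b+2}^{0,\alpha}} = 0$, forcing $u \equiv 0$. The same inequality will automatically provide continuity of the inverse with an $\varepsilon$-independent bound, which is the property ultimately needed to run the fixed-point argument in the next subsection.

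For surjectivity I would argue in two steps. First, observe that for each fixed $\varepsilon > 0$ the weight $\rho_{\varepsilon}$ is bounded above by $1$ and below by $\varepsilon^{p+q}$, so the weighted space $C_{\varepsilon,b}^{k,\alpha}(M)$ coincides (as a set of functions, with equivalent though $\varepsilon$-dependent norms) with the standard Hölder space $C^{k,\alpha}(M)$. It therefore suffices to show that $L$ is surjective onto $C^{0,\alpha}(M)$. Second, from the explicit expression \eqref{linbma}, $L$ is a second-order linear elliptic operator of Schr\"odinger type (Laplacian minus the non-negative multiplication potential $\tfrac{1}{(n-1)!}|d\omega|_{\omega}^{2}$) on the compact manifold $M$, formally $L^{2}$-self-adjoint with respect to the volume induced by $\omega$ (as already noted in Lemma \ref{balancedoperator}). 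Standard elliptic theory then yields that $L$ is Fredholm of index zero between standard Hölder spaces, and $\mathrm{coker}(L) \simeq \ker(L^{\ast}) = \ker(L)$. Since $\omega$ coincides with $\tilde{\omega}$ on $\{r(z) \geq 2\varepsilon^{p}\}$ and $\tilde{\omega}$ is non-Kähler, the metric $\omega$ is itself non-Kähler, so Proposition \ref{invlbma} gives $\ker(L) = 0$, hence $\mathrm{coker}(L) = 0$, and $L$ is surjective.

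I do not foresee any further obstacle at this stage: the genuine analytic difficulty---obtaining a uniform-in-$\varepsilon$ estimate on weighted Hölder norms in a geometry that degenerates as $\varepsilon \to 0$---has already been overcome in Lemma \ref{estimate} via the blow-up/rescaling contradiction argument. What remains is really only a clean repackaging of that estimate together with the Fredholm alternative on the fixed compact manifold $M$, and the observation that the range $b \in (0,n-1)$ appearing in the statement is precisely the range in which Lemma \ref{estimate} applies (the upper bound $b < n-1$ being needed for the boundary term at the singularity to vanish in the integration by parts there).
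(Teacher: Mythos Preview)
Your proposal is correct and follows essentially the same route as the paper's proof: ellipticity and self-adjointness of $L$ give Fredholm index zero, Proposition \ref{invlbma} (applied to the non-K\"ahler balanced metric $\omega$) kills the kernel and hence the cokernel, and Lemma \ref{estimate} furnishes the uniform-in-$\varepsilon$ bound on $L^{-1}$. You are in fact more careful than the paper on two points it leaves implicit---the equivalence of $C_{\varepsilon,b}^{k,\alpha}(M)$ with the standard H\"older space for fixed $\varepsilon$, and the verification that $\omega$ is genuinely non-K\"ahler so that Proposition \ref{invlbma} applies---but the underlying argument is the same.
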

\begin{proof}
Notice that $L$ is elliptic and shares its index with the laplacian, which is zero. Moreover, by Lemma \ref{estimate} we have that $L$ is injective, thus we automatically get that $L$ is also surjective and - from \ref{estimate} - has bounded inverse, thus $L$ is a isomorphism.
\end{proof}
With this result we can now show how to reformulate the original equation as a fixed point problem.\par
In order to do this we shall consider the expansion
\[
F(\psi)=F(0)+L(\psi)+Q(\psi),
\]
and thus rewrite the balanced Monge-Ampère type equation as
\[
F(0)+L(\psi)+Q(\psi)=0,
\]
and using now Lemma \ref{iso}, we get that our equation is therefore equivalent to
\begin{equation}\label{contr}
\psi=L^{-1}(-F(0)-Q(\psi))=:N(\psi),
\end{equation}
i.e. the search for a fixed point for the operator $N: C_{\varepsilon,b}^{2,\alpha}(M) \rightarrow C_{\varepsilon,b}^{2,\alpha}(M)$. To do this, we will have to identify the open set on which we wish to apply Banach's Lemma, and show that on said open set, the operator $N$ can be restricted and gives rise to a contraction.\par
The first thing to do is the following remark.
\begin{remark}\label{distdef}
If $C,\tau>0$, and $\varphi$ is a function on $M$ such that $||\varphi ||_{C_{\varepsilon,-2}^{2,\alpha}} \leq C\varepsilon^\tau$, thanks to Lemma \ref{balancedoperator} (which provides us the extended expression of $i\p\bpa (\varphi\omega^{n-2})$) it is straightforward to see that
\[
||i\p\bpa (\varphi\omega^{n-2})||_{C_{\varepsilon,0}^{0,\alpha}} \leq ||\varphi ||_{C_{\varepsilon,-2}^{2,\alpha}}  \leq C\varepsilon^\tau,
\]
thus we are guaranteed that, choosing $\varepsilon$ to be sufficiently small, $\omega_\varphi^{n-1}>0$, and thus its $(n-1)$ root $\omega_\varphi$ exists and is a balanced metric. Moreover, we can apply again the argument used in Remark \ref{michelsohn}, and obtain that if $||\varphi ||_{C_{\varepsilon,-2}^{2,\alpha}} \leq C\varepsilon^\tau$, then
\[
|\omega_\varphi - \omega|_\omega \leq c||\varphi ||_{C_{\varepsilon,-2}^{2,\alpha}} \leq c\varepsilon^\tau,
\]
which also implies that $\omega_\varphi \rightarrow \omega$, as $\varepsilon \rightarrow 0$.
\end{remark}
Thanks to this remark, we have a suggestion on how to choose the open set on which apply Banach's Lemma, hence we introduce
\[
U_\tau:=\{\varphi \in C_{\varepsilon,b}^{2,\alpha} \>|\> ||\varphi||_{C_{\varepsilon,b}^{2,\alpha}} < \tilde{c}\varepsilon^{(p+q)(b+2)+\tau} \} \subseteq C_{\varepsilon,b}^{2,\alpha},
\]
and we notice that for every $\varphi \in U_\tau$ it holds $||\varphi||_{C_{\varepsilon,-2}^{2,\alpha}} \leq C\varepsilon^\tau$, with $C$ independent of $\varphi$ and $\varepsilon$.\par
We will now prove that on $U_\tau$, the operator $N$ is a contraction. In particular, given $\varphi_1,\varphi_2 \in U_\tau$, we want to estimate
\[
N(\varphi_1)-N(\varphi_2)=L^{-1}((\hat{Q}(\varphi_2)-\hat{Q}(\varphi_1))).
\]
To do so, we notice that by the Mean Value Theorem we can find $t \in [0,1]$ such that
\[
Q(\varphi_1)-Q(\varphi_2)=dQ_{\nu}(\varphi_1-\varphi_2)=(L_\nu-L)(\varphi_1-\varphi_2),
\]
where $\nu=t\varphi_1+(1-t)\varphi_2 \in U_\tau$, and $L_\nu$ is the linearization of $F$ at $\nu$. With the same strategy used to compute $L$ we can easily obtain an expression for $L_\nu$, and thus get
\[
(L_\nu-L)(\varphi_1-\varphi_2)=\frac{n}{n-1}\frac{(\omega_\nu-\omega)\wedge i\p\bpa ((\varphi_1-\varphi_2)\omega^{n-2})}{\omega^n}.
\]
From here, taking the norms with respect to $\omega$, we can use the fact that $\nu \in U_\tau$ together with Remark \ref{distdef}, to obtain
\[
|(L_\nu-L)(\varphi_1-\varphi_2)| \leq c|\omega_\nu - \omega|_\omega | i\p\bpa ((\varphi_1-\varphi_2)\omega^{n-2})|_\omega \leq c \varepsilon^\tau | i\p\bpa ((\varphi_1-\varphi_2)\omega^{n-2})|_\omega,
\]
and thus, by multiplying the inequality with $\rho^{b+2}$, get
\begin{equation}\label{contr1}
||Q(\varphi_1)-Q(\varphi_2)||_{C_{b+2,\varepsilon}^{0,\alpha}} \leq c\varepsilon^\tau ||\varphi_1-\varphi_2||_{C_{\varepsilon,b}^{2,\alpha}},
\end{equation}
hence, choosing $\varepsilon$ sufficiently small ensures us that $N$ is a contraction on $U_\tau$.\par
We are left with proving that $N(U_\tau) \subseteq U_\tau$. To do this we shall assume that $pm-q(b+2)>\tau>0$ (which can easily be done), and see that for every $\varphi \in U_\tau$, thanks to estimate \eqref{contr1} and Lemma \ref{estimate}, we have
\[
\begin{aligned}
||N(\varphi)|||_{C_{\varepsilon,b}^{2,\alpha}} \leq & ||N(\varphi)-N(0)|||_{C_{\varepsilon,b}^{2,\alpha}}+||N(0)|||_{C_{\varepsilon,b}^{2,\alpha}} \\
\leq & c\varepsilon^\tau ||\varphi|||_{C_{\varepsilon,b}^{2,\alpha}}+||L^{-1}(1-e^f)|||_{C_{\varepsilon,b}^{2,\alpha}} \\
\leq & c\varepsilon^\tau ||\varphi|||_{C_{\varepsilon,b}^{2,\alpha}}+||f||_{C_{\varepsilon,b+2}^{0,\alpha}} \\
\leq & c(\varepsilon^{(p+q)(b+2)+2\tau}+\varepsilon^{p(b+2)+pm}) \\
\leq & c\varepsilon^{\min\{\tau, pm-q(b+2)-\tau\}}\varepsilon^{(p+q)(b+2)+\tau} \\
\leq & \tilde{c}\varepsilon^{(p+q)(b+2)+\tau},
\end{aligned}
\]
implying that $N(U) \subseteq U$.\par
This shows that everything is into place to apply Banach's Lemma on the open set $U$ and obtain $\hat{\omega}$ a Chern-Ricci flat balanced metric $\hat{\omega}$ on $M$, thus proving Theorem \ref{main}.\par

Remark \ref{distdef} also implies:
\begin{corollario}
The couple $(M,\hat{\omega})$ Gromov-Hausdorff converges to the singular Calabi-Yau metric on $\tilde{M}_{reg}$ and, up to rescaling, to Joyce's ALE metrics nearby the exceptional curve.
\end{corollario}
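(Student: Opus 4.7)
The plan is to deduce the corollary directly from Remark \ref{distdef}, which gives the uniform $C^0$-estimate $|\hat{\omega}-\omega|_\omega \leq c\varepsilon^\tau$ between the perturbed Chern-Ricci flat metric $\hat{\omega}$ and the pre-gluing metric $\omega = \omega_{\varepsilon,R}$. This translates to a bi-Lipschitz equivalence with constants tending to one, and since this comparison is \emph{scale invariant} (in particular survives the rescaling $\hat{\omega}\mapsto\lambda^{-1}\hat{\omega}$), it preserves both the classical and the pointed Gromov-Hausdorff limits. Hence it suffices to verify the two convergence statements for the explicit family $\omega_{\varepsilon,R}$.

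For the first limit, fix a compact $K \Subset M_{reg}$ and note that for $\varepsilon$ small enough $K \subseteq \{r(z) > 2\varepsilon^p\}$, where by construction $\omega_{\varepsilon,R}\equiv\tilde{\omega}$; hence the inclusion $K \hookrightarrow M$ is an isometry. To upgrade this to a global $\delta$-correspondence, I need the complement $\{r(z)\leq 2\varepsilon^p\}$ to collapse: there the metric is either exactly the flat model, with diameter $\lesssim\varepsilon^p$, or the rescaled truncated ALE model $\lambda\omega_R$ supported on $\{r(\zeta)\leq R\}$, whose $\omega_R$-diameter is $O(R)$; multiplying by $\lambda^{1/2}=\varepsilon^p/R$ yields a total diameter $O(\varepsilon^p) \to 0$. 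This gives the Gromov-Hausdorff convergence $(M,\hat{\omega})\to(M_{reg},\tilde{\omega})$.

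For the second limit, I base at a point of the exceptional set and pass to $(M,\lambda^{-1}\hat{\omega})$. Via the biholomorphism $\zeta = (R/\varepsilon^p)z$, the rescaled pre-gluing metric is exactly $\omega_R$ on the bubble region $\{r(\zeta)\leq R\}\subseteq\hat{X}$, and as $R=\varepsilon^{-q}\to\infty$ this region exhausts $\hat{X}$ while $\omega_R\to\omega_{ALE}$ smoothly on compact sets by the explicit formula from Step 2 of the pre-gluing construction. The complementary region $\{r(z)\geq\varepsilon^p\}$, rescaled by $\lambda^{-1}$, has all compact balls around the base point pushed out to infinity, hence contributes nothing to the pointed limit. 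This yields pointed Gromov-Hausdorff convergence to $(\hat{X},\omega_{ALE})$.

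I expect the main obstacle to be mostly bookkeeping rather than substantive difficulty: carefully tracking the two coordinate systems $z$ on $M$ and $\zeta$ on $\hat{X}$ under the biholomorphism, and confirming that the $\varepsilon^\tau$-error from Remark \ref{distdef} remains small after rescaling (which it does, since $|\hat{\omega}-\omega|_\omega = |\lambda^{-1}\hat{\omega}-\lambda^{-1}\omega|_{\lambda^{-1}\omega}$ is scale-invariant). One small point requiring care is that for the ALE limit the \emph{pointed} formulation of Gromov-Hausdorff convergence is essential, since $\hat{X}$ has infinite $\omega_{ALE}$-diameter and only compact balls around the exceptional set need be matched.
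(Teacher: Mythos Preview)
Your proposal is correct and follows exactly the route the paper intends: the paper gives no argument beyond the sentence ``Remark \ref{distdef} also implies'', so the key input is precisely the uniform estimate $|\hat{\omega}-\omega|_\omega \leq c\varepsilon^\tau$ you invoke, together with the explicit description of the pre-gluing metric $\omega_{\varepsilon,R}$ from Step 3. Your write-up simply fills in the Gromov--Hausdorff bookkeeping (collapse of the gluing region for the global limit, pointed convergence after rescaling for the bubble limit) that the paper leaves to the reader.
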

\begin{proof}
    The choice of the open set $U_\tau$, combined with Remark \ref{distdef}, guarantees that $\phi \in U_\tau$ has $C^{2,\alpha}$ norm bounded from above by a power of $\varepsilon$, and hence ensures the $C^0$ smallness of the deformation term in the ansatz (as shown in Remark \ref{distdef}). Taking the Gromov-Hausdorff limit in $\varepsilon$ reduces then to taking it for $(M,\omega)$, and from the construction of $(M,\omega)$ is straightforward to see that the statement holds.
\end{proof}
% And it is interesting to notice that \textit{en passant} we have proved:
% \begin{corollario}
% On our crepant resolutions, for the balanced classes of the pre-gluing metrics, the Gauduchon conjecture for balanced metrics (see \cite{STW}) holds, i.e. in said balanced classes we can always find a Chern-Ricci flat balanced metric.
% \end{corollario}

We conclude this part with a few remarks.
\begin{remark}
In light of Remark \ref{exceptional}, Stokes' Theorem shows that - with the deformation given by the balanced Monge-Ampère type equation - the volume of the exceptional divisors remains the same as the one of the pre-gluing metric, i.e. the (scaled) volume of the ALE metric.
\end{remark}

\begin{remark}
Thanks to what is known about Joyce's ALE metrics, if we have $k \in \mathbb{N}$ orbifold singularities and we call $E_j^i$, $i=1,...,k_j$ the exceptional divisors corresponding to the resolution of the $j$-th singularity, for $j=1,...,k$, from our construction we can conclude (in the same way as in \cite{BM}) that
\[
[\omega^{n-1}]=[\hat{\omega}^{n-1}]=[\tilde{\omega}^{n-1}]+(-1)^{n-1}\varepsilon^{(2n-2)}(\sum_{i=1}^{k_j} \sum_{j=1}^k a_j^i PD[E_j^i])^{n-1},
\]
where $PD[E_j^i]$ denotes the Poincaré dual of the class $[E_j^i]$.
\end{remark}

Thus completing the proof of Theorem \ref{main}.

\begin{remark}
It is known that for a manifold which is Calabi-Yau with holomorphic volume $\Omega$, the existence of a Chern-Ricci flat balanced metric implies that $\Omega$ is parallel with respect to the Bismut connection associated to said metric. Among the other things, this implies that the restricted holonomy of the Bismut connection of Chern-Ricci flat balanced metrics is contained in $SU(n)$.
\end{remark}

\begin{remark}
Even though this construction is done to address a non-Kähler situation, it can also be applied when $\tilde{M}$ is instead Kähler (Ricci flat). In this case we know from Joyce's theorem that $M$ admits a Kähler Calabi-Yau metric $\omega_1$, hence together with the balanced class induced by our Chern-Ricci flat balanced metric $\hat{\omega}$ we also have the one induced by $\omega_1$. This two balanced classes need not be the same, however, even if they are to coincide, there is no uniqueness result that would guarantee that the two metrics have to be the same; moreover, the deformation we used in our construction does not cover the whole balanced class, hence in this case we are not even guaranteed that the two metrics are linked by our chosen deformation.
\end{remark}

\subsection{Relation to the Hull-Strominger system}
As a conclusion of the paper, we would like to briefly relate our construction to the Hull-Strominger system and how we intend to develop our research in this direction, hence we shall first quickly recall the definition of said system (for more details we refer to the notes \cite{GF}).\\ 

The framework is given by $(X,\Omega)$ a (not necessarily Kähler) Calabi-Yau manifold, and the first equation of the system, for $\omega$ a hermitian metric, is known as the \textit{dilatino equation} and is given by 
\[
d^*\omega=d^c\log||\Omega||_\omega,
\]
which easily seen to be equivalent to the \textit{conformally balanced equation}
\[
d\left(||\Omega ||_\omega \omega^{n-1}\right)=0.
\]
Hence, this last equation tells us that we need to work with balanced manifolds, and thus we see a first relation to our scenario.\\
To complete the system we need to pair the dilatino equation with two Hermite-Einstein equations for holomorphic vector bundles, thus, in the same fashion as what we had with the dilatino equation, the presence of the Hermite-Einstein equation in the Hull-Strominger system will limit us to consider only polystable bundles. Finally, adding one last equation, known as the \textit{Bianchi identity}, we can introduce the system.

\begin{definizione}
Given a Calabi-Yau manifold $(Y,\Omega)$ and a holomorphic vector bundle $E$ on $Y$, we say that the triple $(\omega,\>h,\> \overline{\partial}_T)$ is a solution of the \textit{Hull-Strominger system} if it satisfies
\[
\begin{aligned}
\Lambda_\omega F_h & = 0, \\ 
\Lambda_\omega R & = 0, \\
d^*\omega-d^c\log ||\Omega ||_\omega & = 0, \\
dd^c\omega-\alpha(\text{tr}R\wedge R-\text{tr}F_h\wedge F_h ) & =0;
\end{aligned}
\]
where, $\alpha$ is a non-zero constant, $\omega$ is a hermitian metric on $Y$, $h$ is a hermitian metric along the fibers of $E$, $\overline{\partial}_T$ is a holomorphic structure on the tangent bundle of $Y$, and $R$ is the Chern curvature tensor of $\omega$, read as a hermitian metric on the holomorphic vector bundle $(TY,J,\overline{\partial}_T)$.
\end{definizione}

The \textit{Bianchi identity} (also known as \textit{anomaly cancellation equation}), is the hardest and least understood equation of the system and also the one we wish to address in the development of our research.\\
It is significant to notice that if we choose $(E,h)$ to be the holomorphic tangent bundle with the metric $\omega$, and take $\omega$ a Kähler Ricci-flat metric, we see that this satisfies the system, thus being a solution of the Hull-Strominger system is a condition that generalizes being Kähler Calabi-Yau, hence a very promising candidate class of \textit{special metrics}.\\
Also, thanks to the equivalence between Hermite-Einstein metrics and Hermite-Yang-Mills connections it is possible to rewrite the system from a gauge-theoretical point of view.
\begin{definizione}
Given a Calabi-Yau manifold $(Y,\Omega)$ and a hermitian vector bundle $(E,h)$ (with a fixed holomorphic structure) on $Y$, the triple $(\omega,\> A,\> \nabla)$ is a solution of the \textit{Hull-Strominger system} if it satisfies
\[
\begin{aligned}
\Lambda_\omega F_A & = 0, \quad F_A^{0,2}=0, \\
\Lambda_\omega R_\nabla & = 0, \quad R_\nabla^{0,2}=0, \\ 
d\left( ||\Omega ||_\omega \omega^{n-1}\right) & = 0, \\
dd^c\omega-\alpha(\text{tr}R\wedge R-\text{tr}F_h\wedge F_h ) & =0;
\end{aligned}
\]
where $\alpha$ is a non-vanishing constant, $\omega$ a hermitian metric on $Y$, $A$ is unitary connection on $(E,h)$ and $\nabla$ is a unitary connection on $(TM,J,g)$.
\end{definizione}
This second description is useful to notice a series of necessary conditions when $X$ is compact. Indeed, as already observed we have that $Y$ has to be necessarily balanced, and given the natural balanced class $\tau$ given by the dilatino equation, we have that $E$ and $TY$ have to be $\tau$-polystable. Morever, we have that $c_1(Y)=0$, and also
\begin{equation}\label{yauconj}
\begin{aligned}
c_1(E)\cdot \tau & =0 \\
ch_2(E) & =ch_2(Y) \in H_{BC}^{2,2}(Y,\mathbb{R}),
\end{aligned}
\end{equation}
where $ch_2$ denotes the second Chern character.\\
Although several examples of solutions have been studied over time (several can be found, for example, in \cite{GF}), there is still a very poor understanding on the existence of solutions, even on threefolds, where however it was conjectured by Yau in \cite{Y} that conditions \eqref{yauconj} are not only a necessary condition but even a sufficient one to the existence of solutions in the case of threefolds.\\ 

If we then take our construction, and we view it in the system's scenario, we can make the following final remark in which we explain our ideas on how to expand our construction in this direction.
\begin{remark}\label{applicationhs}
Given $\hat{\beta}$ a Chern-Ricci flat balanced metric on a Calabi-Yau threefold $(Y,\Psi)$, it holds
\[
||\Psi||_{\hat{\beta}} \equiv const.,
\]
showing that our metric $\hat{\omega}$ gives a solution of the conformally balanced/dilatino equation on our crepant resolutions $(M,\Omega)$. Thus our construction gives us two solutions of the dilatino equation on $(M,\Omega)$, that are $\hat{\omega}$ and $\omega':=||\hat{\Omega}||_\omega^{-2}\omega$, where this last one is the dilatino equation solution associated to the balanced metric $\omega$ obtained in the first part of the gluing construction. From here, thanks to the fact that this metrics are nearby a Kähler Ricci-flat metric, an idea could be to try and adapt strategies as in \cite{CPY1} or \cite{DS} to construct a Hermite-Einstein metric on the tangent bundle with respect to the above metrics, and eventually from there try and extend it to a whole solution of the Hull-Strominger system, using - for example - some version of the approach of \cite{AGF}.\\
Other possible paths could instead be related to the orbifold examples from \cite{BTY} and \cite{ST} we recalled in Section 2, on which it could be interesting to see if, again through a gluing process, if it is possible to construct new non-Kähler solutions to the Hull-Strominger system.
\end{remark}

\section{The conifold singularity case}

As anticipated in the introduction, it is natural to ask weather or not the construction can be adapted to the case of ordinary double points on threefolds, in order to fit our result in the conifold transition framework. Unfortunately issues show up, hence in the following we shall - after recalling the ingredients on Ordinary Double Points on threefolds - walk through our construction and see what continues to hold, see what fails and discuss ideas on how to eventually solve the issues.

\subsection{Ordinary Double Points and their small resolutions}

The type of singularity adressed in this case is the one of Ordinary Double Points on threefolds (which are the most common kind of singularities), and are described by the model
\[
X:=\{ z_1^2+z_2^2+z_3^2+z_4^2=0\} \subseteq \mathbb{C}^4,
\]
which is known as the $3$-dimensional \textit{standard conifold}, whose only singular point is the origin. Then we have:

\begin{definizione}
A singular point $p$ in a singular threefold $Y$ is called \textit{ordinary double point} (ODP) if we can find a neighborhood $p\in U\subseteq M$ and a neighborhood $0 \in V \subseteq X$ such that $U$ and $V$ are biholomorphic through a map that sends $p$ to $0$.
\end{definizione}
These singularities arise naturally on threefolds when collapsing $(-1,-1)$-curves, i.e. rational curves biholomorphic to $\mathbb{P}^1$ whose normal bundle is isomorphic to $\mathcal{O}_{\mathbb{P}^1}(-1)^{\oplus 2}$, and actually this procedure to obtain ODPs covers all the possibilities on threefolds. Indeed, the standard conifold can be constructed in several ways, one of which is the following: consider the rank $2$ bundle $\mathcal{O}_{\mathbb{P}^1}(-1)^{\oplus 2}$ on $\mathbb{P}^1$ and notice that the map
\[
([X_1:X_2],(w_1,w_2)) \mapsto (w_1X_1,w_1X_2,w_2X_1,w_2X_2)
\]
maps $\mathcal{O}_{\mathbb{P}^1}(-1)^{\oplus 2}$ onto $X$ - since $X$ through a change of coordinates is biholomorphic to the set $\{W_1 W_2 - W_3 W_4=0\}$ - sending the zero section onto the origin. Moreover this map restricted to $\mathcal{O}_{\mathbb{P}^1}(-1)^{\oplus 2}\setminus \mathbb{P}^1$ (where $\mathbb{P}^1$ is meant as the zero section) gives a biholomorphism with $X\setminus \{0\}$, proving our previous statement. This shows us that these singularities always admit small resolutions (with $\mathbb{P}^1$ as the exceptional curve) biholomorphic to $\hat{X}:=\mathcal{O}_{\mathbb{P}^1}(-1)^{\oplus 2}$, and it can be shown that a singular threefold with $n$ ordinary double points admits exactly $2^n$ small resolutions of this type (every singularity can be resolved with a curve in two distinct bimeromorphic ways).\\

Regarding instead the metric aspect of this singularities, the standard conifold $X$ is naturally endowed with a conical structure. Indeed, we can introduce the function on $\mathbb{C}^4$
\[
r(z):=||z||^{\frac{2}{3}},
\]
which restricted to $X$ yields the conical distance to the singularity, and can be used to define the metric
\[
\omega_{co,0}:=\frac{3}{2}i\partial\overline{\partial}r^2,
\]
on the smooth part of $X$, which is clearly Kähler. Moreover, it can be seen that $\omega_{co,0}$ is actually also Ricci flat, as well as a cone metric over the link $L:=\{r=1\}\subseteq X$ which can be written as
\[
g_{co,0}=\frac{3}{2}(dr^2+r^2g_L),
\]
with $g_L$ a Sasaki-Einstein metric on the link $L$.\\
This metric structure of the standard conifold, with some further work, yields also a Kähler Calabi-Yau structure on the small resolution. In fact, Candelas and de la Ossa (see \cite{CO}) constructed a family of metrics (depending on a parameter $a>0$) via the ansatz 
\begin{equation}\label{anscdo}
\omega_{co,a}:=i\partial\overline{\partial}f_a(r^3)+4a^2\pi_{\mathbb{P}^1}^*\omega_{FS}
\end{equation}
on the conifold $X$, where $\omega_{FS}$ is the Fubini-Study metric on $\mathbb{P}^1$. Imposing Kähler-Ricci flatness in this ansatz resulted in $f_a$ being a smooth function satisfying the ODE
\[
(xf_a'(x))^3+6a^2(xf_a'(x))^2=x^2, \qquad f_a(x) \geq 0,
\]
on $[0,+\infty)$ (which immediately gives $f_a(x)=a^2f_1(x/a^3)$), which allowed the authors to show that the metric constructed smoothly extends to the whole small resolution $\hat{X}$. Here the function $r$ is simply the conical distance from the singularity re-read on the resolution, hence portraying the conical distance from the exceptional curve. Moreover, this family of metrics is such that as $a\rightarrow 0$ the metrics $\omega_{co,a}$ converges, away from the exceptional curve, to the standard cone metric $\omega_{co,0}$, and it is also asymptotic (at infinity) to the cone metric $\omega_{co,0}$, and these facts can be seen explicitly with the following expansion from \cite{CPY1}.

\begin{lemma}\label{expansion}
For $x\gg 1$, the function $f_1(x)$ has a convergent expansion
\[
f_1(x)=\frac{3}{2}x^{\frac{2}{3}}-2\log(x)+\sum_{n=0}^{+\infty}c_n x^{-\frac{2n}{3}}.
\]
\end{lemma}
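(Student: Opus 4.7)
The plan is to reduce the nonlinear ODE
\[
(x f_1'(x))^3 + 6 (x f_1'(x))^2 = x^2
\]
to an algebraic equation by the substitution $u := x f_1'(x)$, which turns the ODE into the polynomial identity $u^3 + 6 u^2 = x^2$. For $x \gg 1$ the dominant balance is $u \sim x^{2/3}$, so I would then introduce the small parameter $y := x^{-2/3}$ and the rescaled unknown $w := u\, y$, converting the equation into
\[
P(w,y) := w^3 + 6 y w^2 - 1 = 0,
\]
a polynomial in both variables with $P(1,0) = 0$ and $\partial_w P(1,0) = 3 \neq 0$.

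Having set this up, the next step is to apply the holomorphic implicit function theorem at $(w,y) = (1,0)$ to produce a unique analytic branch $w(y) = 1 + \sum_{n \ge 1} a_n y^n$ convergent for $|y| < \eta$ for some $\eta > 0$. This is the branch compatible with $f_1 \geq 0$ and $f_1'(x) > 0$ for large $x$; the other two branches at $y = 0$ are the non-real cube roots of unity and are excluded once one observes that $u = x f_1'(x)$ must be real and positive for large $x$ (indeed $u^2(u+6) = x^2$ forces $u \to +\infty$). The coefficient $a_1$ is immediate from implicit differentiation of $P$ at $y = 0$: $3 a_1 + 6 = 0$, so $a_1 = -2$.

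Undoing the substitutions then yields the convergent expansion
\[
f_1'(x) = \sum_{n=0}^{+\infty} a_n\, x^{-(2n+1)/3}
\]
for $x > \eta^{-3/2}$, and the final step is termwise antidifferentiation. The $n = 0$ term integrates to $\tfrac{3}{2} x^{2/3}$ (since $a_0 = 1$); the exceptional $n = 1$ term $-2/x$ integrates to $-2 \log x$; every other exponent $-(2n+1)/3$ with $n \ge 2$ is different from $-1$, so those terms integrate to pure powers $x^{-2(n-1)/3}$, producing the tail $\sum_{m \ge 1} c_m x^{-2m/3}$ with $c_m = -\tfrac{3 a_{m+1}}{2m}$; the constant of integration is absorbed into $c_0$. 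Convergence of the integrated series on the same range is immediate from Cauchy--Hadamard applied to the (faster-decaying) coefficients.

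The main point to handle with care is really the identification of the correct branch and the genuine (not merely formal) convergence of the expansion. The first is dictated by positivity as above; the second holds because $P(w,y)$ is algebraic with a simple root at $(1,0)$, so the implicit function theorem delivers an honest analytic function rather than only a formal power series, which is what upgrades this asymptotic expansion to a convergent one.
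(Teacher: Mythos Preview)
Your argument is correct. The substitution $u=xf_1'(x)$, the rescaling $w=uy$ with $y=x^{-2/3}$, the application of the holomorphic implicit function theorem at the simple root $(w,y)=(1,0)$ of $P(w,y)=w^3+6yw^2-1$, the identification $a_1=-2$, and the termwise integration (with the single logarithmic term at $n=1$) are all valid; the convergence claim follows because the implicit function theorem yields a genuine analytic branch, not merely a formal series.

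As for the comparison: the paper does not give its own proof of this lemma. It is quoted verbatim as ``the following expansion from \cite{CPY1}'' and stated without argument. So you have supplied a complete, self-contained proof where the paper simply appeals to an external reference. Your route via the algebraic implicit function theorem is in fact the natural one, and there is nothing to correct.
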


We shall now move on towards the gluing attempt.

\subsection{Gluing attempt and possible solutions}

First of all, we lay out the details of the setting and take $\tilde{M}$ a \textit{smoothable} Kähler Calabi-Yau singular threefold obtained from the contraction of a finite family of disjoint $(-1,-1)$-curves in a compact complex threefold (thus the singular set of $\tilde{M}$ is made of a finite number of Ordinary Double Points) - hence with the regular part $M_{reg}$ of $\tilde{M}$ equipped with $\tilde{\omega}$ a Kähler Calabi-Yau metric - and $M$ a compact small resolution of $\tilde{M}$.\\

\begin{remark}
The reason why we have much stronger assumptions with respect to the orbifold case, is because for this type of singularities we are not aware of a version of Lemma \ref{flatcutoff}, thus we need a condition to be able to smoothly cut-off the singular metric at the standard model around the singularity (given in this case by the standard cone metric $\omega_{co,0}$), and such condition is given exactly by the smoothability assumption, which allows us to apply the following result from Hein and Sun (Theorem 1.4 and Lemma A.1 from \cite{HS}), which can be simplified for our purpose with the following statement (here written only for threefolds):
\begin{teorema}[Hein-Sun]\label{hs}
Let $\tilde{M}$ be a smoothable singular threefold whose singular set is a finite family of ODPs endowed with a Kähler Calabi-Yau metric $\tilde{\omega}$ on its smooth part $M_{reg}$. Then for every singular point $p \in \tilde{M}\setminus M_{reg}$ there exist a constant $\lambda_0 >0$, neighborhoods $p \in U_p \subseteq \tilde{M}$ and $0 \in V_p \subseteq X$, and a biholomorphism $P: V_p \setminus \{0\} \rightarrow U_p \setminus \{p \}$ such that
\[
P^*\tilde{\omega}-\omega_{co,0}=i\partial\overline{\partial} \varphi, \qquad \text{for some} \> \varphi \in C_{2+\lambda_0}^{\infty},
\]
where $r$ is the conical distance from the singularities and $C_{2+\lambda_0}^{\infty}$ is the space of smooth functions with decay rate at zero of $2+\lambda_0$ (i.e. an $f \in C_{2+\lambda_0}^{\infty}$ is a smooth function such that nearby zero it holds $|\nabla^k f| \leq cr^{2+\lambda_0-k}$ for all $k \geq 0$).
\end{teorema}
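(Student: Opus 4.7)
The plan proceeds in three stages: a soft identification of the metric tangent cone at $p$ with the standard conifold, a quantitative upgrade of that identification to a polynomial rate, and extraction of a Kähler potential with the prescribed decay.

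First, I would exploit the smoothability of $\tilde{M}$ to invoke Donaldson--Sun theory on non-collapsed Gromov--Hausdorff limits of Kähler--Einstein manifolds, and thereby identify the metric tangent cone of $(\tilde{M},\tilde{\omega})$ at $p$. This cone is necessarily a Ricci-flat Kähler cone, and the complex-analytic identification of the germ of $\tilde{M}$ at $p$ with that of $X$ at $0$ forces it to coincide with $(X,\omega_{co,0})$. The construction of the limit yields the biholomorphism $P$ on a small neighborhood of $p$, together with a soft convergence $P^{*}\tilde{\omega}\to\omega_{co,0}$ of rescaled metrics as $r\to 0$.

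Second, and most technically demanding, I would upgrade this soft convergence to a polynomial rate. Since both metrics are Kähler with matching holomorphic volume form (up to a nowhere-vanishing unit), the difference $P^{*}\tilde{\omega}-\omega_{co,0}$ is $d$-closed of type $(1,1)$; on the simply connected smooth locus of $V_{p}$, the $\partial\overline{\partial}$-lemma furnishes a potential $\varphi$ with $P^{*}\tilde{\omega}-\omega_{co,0}=i\partial\overline{\partial}\varphi$. The function $\varphi$ then satisfies a complex Monge--Ampère equation whose linearization is, modulo lower-order terms, the Laplacian of the Calabi--Yau cone. Separating variables, the admissible homogeneous decay rates of solutions form a discrete \emph{indicial set} determined by the spectrum of the Laplacian on the Sasaki--Einstein link, and the first admissible rate above the trivial quadratic scaling provides $2+\lambda_{0}$. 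Peeling off the leading-order indicial components and iterating would promote the $o(1)$ convergence from the first step to the quantitative estimate $|\nabla^{k}\varphi|\le c\,r^{2+\lambda_{0}-k}$ for every $k\ge 0$; standard weighted Schauder estimates on the conifold then deliver smoothness in all derivatives.

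The central technical obstacle is the second stage: producing a genuine \emph{polynomial} rate (rather than a merely continuous $o(1)$ decay or a logarithmic correction) is the analytic heart of Hein--Sun \cite{HS}, and rests on Kähler--Einstein regularity near conical singularities, a careful description of the indicial roots for the complex Monge--Ampère operator on the standard conifold, and sharp control of the error produced at each iteration.
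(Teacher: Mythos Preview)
The paper does not prove this theorem at all: it is quoted verbatim as a black-box input (``Theorem 1.4 and Lemma A.1 from \cite{HS}'') and used only to guarantee that near each ODP the singular Calabi--Yau metric $\tilde{\omega}$ is $i\partial\overline{\partial}$-close to the model cone metric $\omega_{co,0}$ at a polynomial rate. So there is nothing in the paper to compare your proposal to.

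That said, your sketch is a fair high-level outline of the actual Hein--Sun argument, with one caveat. In your first stage you propose to obtain the biholomorphism $P$ and the tangent cone identification from Donaldson--Sun theory applied to the smoothing family. Hein--Sun do use the smoothability hypothesis, but the role it plays is more specific: it supplies the a priori non-collapsing and diameter bounds (via embedding the smoothing into projective space and using the Bergman kernel) needed to know that the singular metric $\tilde{\omega}$ has a well-defined metric tangent cone at $p$ in the first place, and that this cone is the unique Ricci-flat K\"ahler cone on $X$. The biholomorphism $P$ itself comes from the complex-analytic structure of the ODP, not from the Gromov--Hausdorff limit. Your second and third stages---indicial analysis of the Monge--Amp\`ere linearization on the cone, iterative improvement of the decay rate, and weighted Schauder theory---are indeed the analytic core of \cite{HS}, and you have correctly identified where the constant $\lambda_0$ comes from (the first indicial root above $2$ for the cone Laplacian on the Stenzel/conifold link).
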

Anyway, what follows actually works if we replace the assumption above with: $\tilde{\omega}$ a singular Chern-Ricci flat balanced metric such that in a neighborhood of each singularity is asymptotic to the standard cone metric $\omega_{co,0}$.
\end{remark}

Now, since our work aims to face the case of compact non-Kähler small resolutions of said $\tilde{M}$, before describing the gluing attempt it is significant to show that this kind of resolutions are actually a very common situation.

\begin{remark}
Thanks to a result from Cheltsov (see \cite{Ch}) we know that a hypersurface $\tilde{M}$ in $\mathbb{P}^4$ of degree $d$ with only isolated ODPs is factorial when $\tilde{M}$ has at most $(d-1)^2-1$ singularities, thus is in particular $\mathbb{Q}$-factorial. On the other hand $\tilde{M}$, being a hypersurface is clearly smoothable (in more general situations, one can apply the work from Namikawa and Steenbrink - see \cite{NS} - to obtain again the smoothability), and hence, thanks to the results from Friedman (see \cite{F}) we have that any small resolution $M$ of $\tilde{M}$ with exceptional curves $C_1,...,C_k$, $C_i \simeq \mathbb{P}^1$, satisfies necessarily a condition
\[
\sum_{i=1}^k\lambda_i [C_i]=0 \quad \text{in} \> \> H_2(M,\mathbb{R}), \quad \text{where each}\> \> \lambda_i\neq 0,
\]
which immediately implies that if $\tilde{M}$ has only one ODP, then $M$ can't be Kähler because it contains a homologically trivial curve (notice that the generic singular quintic threefold is expected to have exactly one node, and is smoothable since it is a hypersurface in the projective space, hence satisfies this scenario).\\ 
Moreover, Werner proved in \cite{W} that $M$ is projective if and only if all $C_i$s are homologically non-trivial, and since $M$ is Moishezon, projectivity is equivalent to Kählerness. Thus the class of examples above lies in a larger one, since every small resolution with at least a homologically trivial exceptional curve is non-Kähler.
\end{remark}

Before discussing the construction, if we momentarily drop the curvature condition, it is straightforward from literature to conclude the existence of balanced metrics on the small resolution. Indeed:

\begin{remark}
Thanks to the results from Hironaka and Alessandrini-Bassanelli (\cite{Hi} and \cite{AB2}), we already know that such small resolutions admit balanced metrics, since blowing up the singularites produces a smooth Kähler threefold which is birational to the small resolution. This fact also shows that for the non-Kähler small resolutions we are considering, the Fino-Vezzoni conjecture (see \cite{FV}, Problem 3) holds true, since $M$ is Moishezon, and thus we can apply Theorems B and C from \cite{CRS} to obtain that $M$ does not admit SKT metrics.
\end{remark}

We will now present the gluing attempt. Since the proofs are essentially the same as the ones performed in Sections 2 and 3, we will just avoid them and only state the results. Again for simplicity we will just work with one singularity.\\
The first thing to do is to produce a pre-gluing metric, and in the same fashion as we have done in Step 2, we do this in three steps.
\begin{itemize}
\item[(1)] First, we glue the background singular metric $\tilde{\omega}$ to the standard cone metric around the singularity. To do so, we take a cut-off function $\chi_\varepsilon$ as in Step 1 above, and use Theorem \ref{hs}. Indeed, if we take $p>0$ and $\varepsilon>0$ sufficiently small, so that on the region $\{0<r\leq 2\varepsilon^p\}\subseteq X$ exists a constant $\lambda_0>0$ and is defined a function $\varphi \in C_{2+\lambda_0}^{\infty}$ such that
\[
\tilde{\omega}=\omega_{co,0}+i\partial\overline{\partial}\varphi,
\]
we can define the smooth real $(1,1)$-form
\[
\tilde{\omega}_\varepsilon :=\omega_{co,0}+i\partial\overline{\partial}(\chi_\varepsilon(r)\varphi),
\]
which for $\varepsilon$ sufficiently small defines a Kähler metric on $M_{reg}$ which is exactly conical around the singularity.
\item[(2)] Now, we work on the small resolution of the conifold $\tilde{X}$ and glue the Candelas-de la Ossa metric $\omega_{co,a}$ to the standard cone metric, away from the exceptional curve, and since it's not possible to do it preserving the Kähler condition, we will do it maintaining the balanced one. This can be done thanks to the fact that the Candelas-de la Ossa metric is not exact at infinity, but its square is so, since it holds
\[
\omega_{co,a}^2=\left( i\partial\overline{\partial}\left(\frac{3}{2}r^2+a^2\psi_a(r)\right)\right)^2+2a^2i\partial\overline{\partial}\left(f_a(r^3)\wedge\pi^*\omega_{FS}\right).
\]
Thus if we introduce a cut-off function $\chi_R$ as in Step 2 above, we can define the family of closed $(2,2)$-forms
\[
\omega_{a,R}^2=\left( i\partial\overline{\partial}\left(\frac{3}{2}r^2+a^2\chi_R(r)\psi_a(r)\right)\right)^2+2a^2i\partial\overline{\partial}\left(\chi_R(r)f_a(r^3)\right)\wedge\pi^*\omega_{FS},
\]
which correspond to balanced metrics for sufficiently large $R>0$.
\item[(3)] As in Step 3 above, we suitably rescale the metrics $\omega_{a,R}$ on the bubble with a geometric parameter $\lambda$ and match the two pieces on their exactly conical regions, and hence define
\[
\omega=\omega_{\varepsilon,R}:=
\begin{cases} \lambda\omega_{a,R} \qquad & \text{on} \> r(\zeta)\leq R, \\
\omega_{co,0} \qquad & \text{on}\> \varepsilon^p \leq r(z) \leq 2\varepsilon^p, \\ 
\omega_\varepsilon \qquad & \text{on} \> r(z) \geq 2\varepsilon^p.
\end{cases}
\]
\end{itemize}
At this stage, as done above, we can just unify the parameters $\varepsilon$ and $R$, and choose $R:=\varepsilon^{-q}$, with $q>0$, and using Remark \ref{michelsohn} we can see that on the gluing region $\{\frac{1}{2}\varepsilon^p<r\leq 2\varepsilon^p\}$ holds
\[
\omega=\omega_{co,0}+O(r^{\lambda_0})+O(r^{2q/p}\log r).
\]
Moreover, we can also here match the holomorphic volumes of the singular threefold and of the small resolution to obtain an (almost) explicit holomorphic volume $\Omega$ for $M$, which can be used again to define the global Chern-Ricci potential
\[
f=f_{p,q,\varepsilon}:=\log\left(\frac{i\hat{\Omega}\wedge\overline{\hat{\Omega}}}{\omega^3}\right),
\]
and obtain that globally on $M$ holds
\[
|f|= O(r^{\lambda_0})+O(r^{2q/p}\log r),
\]
i.e. a small Chern-Ricci potential. 
\begin{remark}
As in Remark \ref{applicationhs}, the existence of this metric gives us immediately a solution to the dilatino equation, that is the metric $\omega':=||\hat{\Omega}||_\omega^{-2}\omega$, which is still quite explicit, thus again a potentially interesting starting point for the construction of a solution to the Hull-Strominger system.
\end{remark}

Let us now analyze then the cohomology class naturally associated to the metric $\omega$ just obtained, i.e. the $(2,2)$-class
\[
[\omega^2] \in H_{dR}^{2,2}(M).
\]
If we introduce two cut off functions $\theta_1,\theta_2 : [0,+\infty) \rightarrow [0,1]$ defined as follows:
\[
\theta_1(x):=
\begin{cases}
1 & \text{if} \>\> x\leq\frac{1}{8}\varepsilon^{-q} \\
\text{non increasing} & \text{if} \>\> \frac{1}{8}\varepsilon^{-q} \leq x \leq \frac{1}{4}\varepsilon^{-q} \\
0 & \text{if} \>\> x \geq \frac{1}{4}\varepsilon^{-q}
\end{cases}
\]
and
\[
\theta_2(x):=
\begin{cases}
0 & \text{if} \>\> x\leq 8\varepsilon^{-q} \\
\text{non decreasing} & \text{if} \>\> 8\varepsilon^{-q} \leq x \leq16\varepsilon^{-q} \\
1 & \text{if} \>\> x \geq 16\varepsilon^{-q};
\end{cases}
\]
and since for sufficiently small $\varepsilon$ we have that $\omega$ is exact on $K:=\{\frac{1}{8}\varepsilon^{-q} \leq r(\zeta) \leq 16\varepsilon^{-q}\}$, it exists a $3$-form $\beta$ such that
\[
\omega^2=i\p\bpa\beta \quad \text{on} \> K.
\]
Introduce then the form
\[
\Omega_c:=
\begin{cases}
i\p\bpa((\theta_1(r(\zeta))+\theta_2(r(\zeta)))\beta) \quad \text{on} \> K;\\
\omega^2 \quad \text{elsewhere},
\end{cases}
\]
and notice that
\[
\beta-(\theta_1(r)+\theta_2(r))\beta
\]
can be extended as zero to the whole $M$, thanks to the definition of the cut-offs, and thus get that 
\[
[\omega^2]=[\Omega_c],
\]
i.e. the class $[\omega^2]$ can be represented by $\Omega_c$. In addition, the two cut-offs introduced also allow us to decompose $\Omega_c=\Omega_c'+\Omega_c''$, such that on $K$ hold
\[
\Omega_c'=\begin{cases} i\p\bpa(\theta_1(r)\beta) & \text{on} \> K,\\ \omega^2 & \text{on} \> r(\zeta) \leq \frac{1}{8}\varepsilon^{-q}, \\ 0 & \text{elsewhere}, \end{cases} \quad \text{and} \quad \Omega_c''=\begin{cases} i\p\bpa(\theta_2(r)\beta) & \text{on} \> K,\\ \omega^2 & \text{on} \> r(\zeta) \geq 16\varepsilon^{-q}, \\  0 & \text{elsewhere}. \end{cases}
\]
In particular, the choice of the cut-off functions allows us to interpret the forms $\Omega_c'$ and $\Omega_c''$ as compactly supported forms on (respectively) $\hat{X}$ and $M_{reg}$ (via the identifications considered throughout the paper), and from their definition it is straightforward to see that
\[
[\Omega_c']=\varepsilon^{4(p+q)}[\omega_{co,a}^2] \in H_c^{2,2}(\hat{X})
\]
and
\[
[\Omega_c'']=[\tilde{\omega}^2] \in H_c^{2,2}(M_{reg}),
\]
where $H_c$ denotes the compactly supported cohomology group.\par
Now, we can actually say more about $[\Omega_c']$:
\begin{remark}
    Using \cite[Theorem 5.1]{Go} and \cite[Theorem 3.1]{CH}, we can see that $$\omega_{co,a}=\pi^*\omega_{FS}+i\p\bpa h$$ globally on $\hat{X}$, hence with $h$ a smooth function globally defined on $\hat{X}$. This implies immediately that $\omega_{co,a}^2$ is $i\p\bpa$-exact on the whole $\hat{X}$.
\end{remark}
Thus, $[\Omega_c']=0$, and hence
\[
[\omega^2]=[p^*\tilde{\omega}^2] \in H^4(M),
\]
where $p:M \rightarrow \tilde{M}$ is the resolution map.
Finally, we also notice that 
\[
\int_{\mathbb{P}^1}\omega=\varepsilon^{2(p+q)}\int_{\mathbb{P}^1}\omega_{co,a} \underset{\varepsilon \rightarrow 0}{\longrightarrow} 0,
\]
hence the balanced class $[\omega^2]$, as $\varepsilon \rightarrow 0$, converges to a \textit{nef class}, i.e. to the boundary of the balanced cone. This completes the proof of Proposition \ref{balancedodp}.\\\\
From what was proven above, the pre-gluing metric $\omega$ appears as suitable for a deformation argument, but unfortunately it is exactly here where the issue lies, and descends from the asymptotic behaviour of the Candelas-de la Ossa metrics.\\
Indeed, we can again consider the balanced Monge-Ampère type equation \eqref{balma}, obtained with our ansatz for the Fu-Wang-Wu balanced deformation, and obtain the corresponding operator $F$ and its linearization at zero $L$ (we use the same names of the operators used above since their expressions are unchanged). At this point, considering analogous weighted Hölder spaces as the ones used in Section 3, and a variation of $F$ (following an argument of \cite{Sz}) given by $\tilde{F}(\psi):=\frac{\omega_\psi^n}{\omega^n}-e^{f-ev_x{\psi}}$ we obtain, with essentially the same proof, the invertibility of the corresponding linearization $\tilde{L}$ and an estimate for its inverse (as in Lemma \ref{estimate}), i.e.
\begin{lemma}
For every $b \in (0,2)$ it exists $c>0$ (independent of $\varepsilon$) such that for sufficiently small $\varepsilon$ the operator $\tilde{L}$ is invertible and it holds
\[
||u||_{C_{\varepsilon,b}^{2,\alpha}} \leq c||\tilde{L}u||_{C_{\varepsilon,b+2}^{0,\alpha}},
\]
for all $u \in C_{\varepsilon,b}^{2,\alpha}$.
\end{lemma}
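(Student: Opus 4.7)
The strategy mirrors Lemma \ref{estimate} almost verbatim, with one essential twist: since $\tilde{\omega}$ is Kähler Calabi--Yau on $M_{reg}$, the term $|d\tilde{\omega}|_{\tilde{\omega}}^2$ vanishes in the limit, so the unmodified operator $L$ would have constants in the limit kernel. The Sz\'ekelyhidi-type modification $\tilde F(\psi)=\omega_\psi^n/\omega^n-e^{f-\mathrm{ev}_x\psi}$ is introduced precisely to fix this: a direct computation gives
\[
\tilde L u = L u + \mathrm{ev}_x(u)\,e^f,
\]
where $\mathrm{ev}_x$ evaluates at a fixed reference point $x$ (say, a point lying on the exceptional $\mathbb{P}^1$, which under the resolution map is identified with the node). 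The plan is to argue by contradiction as in Lemma \ref{estimate}: assume there are sequences $\varepsilon_k\to 0$ and $u_k$ with $\|u_k\|_{C^{2,\alpha}_{\varepsilon_k,b}}=1$ but $\|\tilde L u_k\|_{C^{0,\alpha}_{\varepsilon_k,b+2}}<1/k$, and chase the usual two-limit analysis on $M_{reg}$ and on the bubble $\hat X$.

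\emph{Limit on $M_{reg}$.} By Arzel\`a--Ascoli extract $u_k\to u_\infty$ in $C^{2,\alpha}_{b,\mathrm{loc}}(M_{reg})$. Since $\tilde{\omega}$ is Kähler and $e^f\to 1$ on compacts of $M_{reg}$, the limit equation reads
\[
\tfrac{n}{n-1}\Delta_{\tilde{\omega}} u_\infty + c_\infty = 0,\qquad c_\infty:=\mathrm{ev}_x(u_\infty).
\]
Integrate this identity over $M_\delta=M\setminus B_\delta(x)$: Stokes produces a boundary term controlled, via the weighted bound $|u_\infty|\le C r^{-b}$ with $b\in(0,2)$, by $C\delta^{2n-b-2}\to 0$, so one gets $c_\infty\cdot\mathrm{Vol}(M_{reg})=0$ and hence $c_\infty=0$. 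Then $u_\infty$ is harmonic on $M_{reg}$ with controlled singular behaviour at $x$; since $b<2n-2$, removable singularities forces $u_\infty$ to extend harmonically (hence constantly) to $M$, and the constant must equal $c_\infty=0$. So $u_\infty\equiv 0$, and in particular $u_k\to 0$ uniformly on compacts of $M_{reg}$.

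\emph{Limit on $\hat X$.} Shift to the large coordinates $\zeta$, rescale $U_k:=\varepsilon_k^{b(p+q)}u_k$, and introduce the weight $\tilde\rho$ on $\hat X$ as in Lemma \ref{estimate}. The bound $|\tilde\rho^b U_k|\le c$ (with analogous derivative bounds) gives a subsequential limit $U_\infty$ in the weighted Hölder space on $\hat X$ associated with $\omega_{co,a}$. Under this rescaling $\rho^{b+2}L u_k$ converges to $\tilde\rho^{b+2}\Delta_{\omega_{co,a}}U_\infty$, while the extra contribution $\rho^{b+2}\mathrm{ev}_x(u_k)e^f$ is bounded by $C\varepsilon_k^{(p+q)(b+2)}\cdot\varepsilon_k^{-b(p+q)}$ on any compact and therefore vanishes in the limit. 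Hence $U_\infty$ is $\omega_{co,a}$-harmonic on $\hat X$ with $U_\infty\to 0$ at infinity, so $U_\infty\equiv 0$. The final bootstrap argument to upgrade local convergence on $\hat X$ to uniform convergence -- with the auxiliary blow-up centred on a would-be non-vanishing point, landing either back in $M_{reg}$ (contradicting the previous step) or in the conical model $X^\ast$ where no nontrivial bounded harmonic function decaying at infinity exists -- is word-for-word the one in Lemma \ref{estimate}. Once the contradiction is reached, invertibility of $\tilde L$ follows exactly as in Lemma \ref{iso}: $\tilde L$ is a compact perturbation of an elliptic operator of index zero and the estimate gives injectivity, so surjectivity and continuous inverse are automatic.

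\emph{Main obstacle.} The delicate point, compared to the orbifold case, is that one cannot simply integrate by parts against $u_\infty$ to get $u_\infty\equiv 0$ on $M_{reg}$ because the key term $|d\tilde{\omega}|^2$ is absent in the Kähler Calabi--Yau limit. The Sz\'ekelyhidi correction $\mathrm{ev}_x(u)e^f$ is exactly what replaces that missing zeroth-order coefficient and, crucially, it does so via a single pointwise datum that interacts with the global integral identity in the quantitatively correct way to force $c_\infty=0$. Making this interplay rigorous (namely choosing $b<\min\{2,2n-2\}$ so that the boundary term at the puncture vanishes, and checking that the rescaled $\mathrm{ev}_x$ term on $\hat X$ is subdominant) is the only real novelty compared to Lemma \ref{estimate}.
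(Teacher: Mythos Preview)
Your overall strategy is exactly what the paper intends by ``essentially the same proof'' combined with the Sz\'ekelyhidi modification, and the identification of the missing zeroth-order term in the K\"ahler limit as the reason for introducing $\mathrm{ev}_x$ is correct. There is, however, a real gap in the $M_{reg}$ step caused by your placement of the evaluation point $x$ on the exceptional $\mathbb{P}^1$. With that choice, the limit $u_\infty$ lives on $M_{reg}$ and $x$ corresponds to the \emph{singular} node, so ``$c_\infty:=\mathrm{ev}_x(u_\infty)$'' is not defined; what exists is only the numerical limit $c_\infty=\lim_k u_k(x)$, and nothing ties this number to the (constant) value of $u_\infty$. Your integration gives $c_\infty=0$ and the integration-by-parts (boundary term $O(\delta^{4-2b})\to 0$ for $b<2$) gives $u_\infty$ constant, but if that constant is some $c\neq 0$ you get no contradiction with $\|u_k\|=1$, and the whole scheme stalls.

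The fix is to choose $x\in M_{reg}$ away from the node, which is the standard choice in Sz\'ekelyhidi's argument. Then $c_\infty=\lim u_k(x)=u_\infty(x)$ by local uniform convergence; integrating the limit equation forces $u_\infty(x)=0$; harmonicity plus the IBP gives $u_\infty\equiv u_\infty(x)=0$. On the bubble this choice is harmless: since $\rho(x)\asymp 1$ one has $|\mathrm{ev}_x(u_k)|\le C$, so the extra term contributes $O(\varepsilon_k^{(p+q)(b+2)})\to 0$ on compacta of $\hat X$, and your remaining analysis (harmonic limit for $\omega_{co,a}$, neck rescaling onto the cone $X^\ast$) goes through unchanged. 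A minor additional point: ``removable singularities'' is not the right invocation at a conical point; the IBP with boundary control $O(\delta^{4-2b})$ is what actually yields the constancy of $u_\infty$ and should replace that phrase.
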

From here, we see that we can again turn the equation $\tilde{F}(\psi)=0$ (which still produces Chern-Ricci flat balanced metrics) into a fixed point problem. In order to do this we shall introduce the operators $\hat{F}, E, G: C_{\varepsilon,b}^{2,\alpha}(M) \rightarrow C_{\varepsilon,b+2}^{0,\alpha}(M)$ defined as
\[
\hat{F}(\psi):=\frac{\omega_\psi^3}{\omega^3}, \quad E(\psi):=e^{f+ev_x(\psi)} \quad \text{and} \quad G(\psi)=e^fev_x(\psi),
\]
from which we can write
\[
\tilde{F}=\hat{F}-E.
\]
Now, we can consider the expansion
\[
\hat{F}(\psi)=\hat{F}(0)+L(\psi)+\hat{Q}(\psi),
\]
and thus rewrite $\tilde{F}(0)=0$ as
\[
\hat{F}(0)+L(\psi)+\hat{Q}(\psi)-E(\psi)=0.
\]
Here, we notice that $\tilde{L}=L-G$, thus we can rewrite $\tilde{F}(0)=0$ once more and get
\[
\hat{F}(0)+\tilde{L}(\psi)+\hat{Q}(\psi)+G(\psi)-E(\psi)=0, 
\]
and using the above Lemma, we get that the balanced Monge-Ampère type equation is therefore equivalent to
\begin{equation}
\psi=\tilde{L}^{-1}(E(\psi)-G(\psi)-\hat{F}(0)-\hat{Q}(\psi))=:N(\psi),
\end{equation}
i.e. the search for a fixed point for the operator $N: C_{\varepsilon,b}^{2,\alpha}(M) \rightarrow C_{\varepsilon,b}^{2,\alpha}(M)$.\\
At this stage, analogously as above it is easy to check that on a suitable open set $U_\tau$, with $\tau>0$, given by
\[
U_\tau:=\{\varphi \in C_{\varepsilon,b}^{2,\alpha} \>|\> ||\varphi||_{C_{\varepsilon,b}^{2,\alpha}} < \tilde{c}\varepsilon^{(p+q)(b+2)+\tau} \} \subseteq C_{\varepsilon,b}^{2,\alpha},
\]
it holds that $N$ is a contraction operator. Unfortunately, it is impossible to consistently choose $p$, $q$ and $\tau$ to repeat the above proof and obtain that
\[
N(U_\tau) \subseteq U_\tau,
\]
and this is caused by the asymptotic quadratic decay to the cone of the Candelas-de la Ossa metrics (unusual for Calabi-Yau metrics). Actually, what happens is that this quadratic decay is exactly the threshold for this argument to work, since if said decay was (arbitrarily) more than quadratic, the argument would have worked without issues.

Analyzing further the Candelas-de la Ossa metrics, one can see that if we just consider the cut-off metrics $\omega_{a,R}$ on the small resolution $\hat{X}$, these are exactly conical at infinity, thus a deformation argument as the one performed above could lead to Chern-Ricci flat balanced metrics with faster decay to the cone, but unfortunately the metric $\omega_{a,R}$ cannot be used to do this as the "initial error" given by the Chern-Ricci potential of said metric turns out to be blowing up with respect to the weighted Hölder norm, suggesting that there might not be any Chern-Ricci flat balanced metrics in a neighborhood of the Candelas-de la Ossa metrics.\\
Hence, a possibility that we wish to explore in order to solve this issue, is to understand if it is possible to obtain Chern-Ricci flat balanced metrics on $\hat{X}$ which have fast decay but are not necessarily near to the Candelas-de la Ossa metrics, and the approach we think might be interesting to use is to try and obtain a balanced version of Conlon-Hein's result (see \cite{CH}) starting from the metric $\omega_{a,R}$, which would immediately produce the missing ingredient to complete the above failed gluing construction. Obviously such a problem comes with several challenges on the analytic side, as the balanced setting and the definition of the balanced Monge-Ampère type equation do not allow many of the tools typically to obtain Yau's estimates such as the Moser iteration technique, and the non-compact (even though weighted) setting makes it also hard to apply other inequalities that are typically used in non-Kähler settings such as the Cherrier inequality (see \cite{TW1}). Another possible interesting path to take could be to try and understand if the balanced class induced by the metric $\omega$ could be a polystable class for the holomorphic tangent bundle. This, thanks to the Hitchin-Kobayashi correspondence, would lead us to the existence of Hermite-Einstein metrics on said bundle, and thus add a block in the construction of a solution to the Hull-Strominger system.

\end{document}